\newcommand*\mathinhead[2]{\texorpdfstring{$#1$}{#2}}
\newcommand*\mathinheadbold[2]{\texorpdfstring{$\boldsymbol{#1}$}{#2}}
\addspace\texttt{\mkbibbrackets{\thefield{arxivclass}}}}}}
\addspace\texttt{\mkbibbrackets{\thefield{arxivclass}}}}}}
\newtheorem{theorem}{Theorem}[section]%[section]
\newtheorem{lemma}[theorem]{Lemma}%[section]
\newtheorem{proposition}[theorem]{Proposition}%[section]
\newtheorem{definition}[theorem]{Definition}%[section]
\newtheorem{remark}[theorem]{Remark}
\def\p{\partial}
\def\p{\partial}
\def\<{\langle}
\def\>{\rangle}
\def\ep{\epsilon}
\def\om{\omega}
\def\Om{\Omega}
\def\p{\partial}
\newcommand{\mfR}{\mathbf{R}}
\newcommand{\mfS}{\mathbf{S}}
\newcommand{\mcB}{\mathcal{B}}
\newcommand{\mcC}{\mathcal{C}}
\newcommand{\mcH}{\mathcal{H}}
\newcommand{\mcU}{\mathcal{U}}
\newcommand{\ra}{\rightarrow}
\newcommand{\rd}{{\rm d}}
\numberwithin{equation} {section}
\begin{document}
	
	\title[\mathinhead{C^{1,1}}{C11}-rectifiability on sphere%\mathinhead{\mfS^{n+1}}{mfS}
	]{\mathinheadbold{C^{1,1}}{C11}-rectifiability and Heintze-Karcher inequality on $\mfS^{n+1}$}
	
	\author{Xuwen Zhang}
	\address{School of Mathematical Sciences\\
		Xiamen University\\
		361005, Xiamen, P.R. China}
	\email{xuwenzhang@stu.xmu.edu.cn}
	%	\thanks{This work is  supported by test.
		%	}
	
	\begin{abstract}
		In this paper, by isometrically embedding $(\mfS^{n+1},g_{\mfS^{n+1}})$ into $\mfR^{n+2}$, and using nonlinear analysis on the codimension-2 graphs, we will show that the level-sets of the distance function from the boundary of any open set in sphere, are $C^{1,1}$-rectifiable. As a by-product, we establish a Heintze-Karcher inequality on sphere.
		\\
		%53C21
		\noindent {\bf MSC 2020:} 35B65, 49Q15, 49Q20, 53C21.\\
		\noindent{\bf Keywords:}  Level-sets of distance function, $C^{1,1}$-Rectifiability, Heintze-Karcher inequality.\\
		
	\end{abstract}
	
	\maketitle
	
	%\medskip
	
	%\tableofcontents
	
	%%%%%%%%%%%%%%%%%%%%%%%%%%%
	% abstract, keywords and Subject classification are optional.
	%%%%%%%%%%%%%%%%%%%%%%%%%%%
	% Most people don't use these, so they are "commented out"
	% by starting the lines with a "%"
	%\begin{keywords}
	%   \LaTeX, typesetting
	%\end{keywords}
	
	%\begin{AMS}
	%   50C60, 18C25
	%\end{AMS}
	
	%%%%%%%%%%%%%%%%%%%%%%
	% % Here is the start of the Text
	%%%%%%%%%%%%%%%%%%%%%%
	\section{Introduction}
The isoperimetric theorem, a fundamental but important topic in the calculus of variations, has attached well attention of many mathematicians. From the perspective of the modern calculus of variations, sets of finite perimeter are believed to be the natural competition class in which the isoperimetric theorem shall be formulated.

Starting from De Giorgi \cite{DeGiorgi54,DeGiorgi55}, who managed to show by using Steiner symmetrization and the compactness theorem of sets of finite perimeter that Euclidean balls are the only isoperimetric sets (global minimizers) among sets of finite perimeter, mathematicians have been working in various context of minimizers to study the isoperimetric problem for decades. Such problem is already found to be very subtle in the context of local minimizers, due to the lack of regularity in the higher dimensional situation (and hence the classical moving plane method fails to be applicable), see \cite{SZ18} for an example of local volume-constrained perimeter minimizer admitting singularities. Despite these obstacles, very recently, Delgadino-Maggi \cite{DM19} solved the very important open problem: the characterization of critical points of the Euclidean isoperimetric problem among sets of finite perimeter. In the weakest assumption (critical points of Euclidean isoperimetric problem), they obtained the following (see also \cite{RKS20} for the anisotropic version, which is solved by using a completely different method with \cite{DM19}).
\begin{theorem}[{\cite[Theorem 1]{DM19}}]\label{Thm-DM19-1}
	Among sets of ﬁnite perimeter and ﬁnite volume, ﬁnite unions of balls with equal radii are the unique critical points of the Euclidean isoperimetric problem.
\end{theorem}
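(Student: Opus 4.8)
The statement is a finite-perimeter version of Alexandrov's soap-bubble theorem, and the natural plan is to transplant the Montiel--Ros proof of the latter --- which rests on the Heintze--Karcher inequality --- to the class of sets of finite perimeter. By definition, a critical point $E$ is a set of finite perimeter and finite volume admitting a Lagrange multiplier $\l\in\rr$ with $\delta P(E)[X]=\l\,\delta|E|[X]$ for every $X\in C^1_c(\rr^{n+1};\rr^{n+1})$; equivalently, $E$ has constant distributional mean curvature, the first variation of perimeter being $\int_{\p^*E}n\l\,\langle X,\nu_E\rangle\,d\mcH^n$ with no singular part. The first task is regularity: the associated varifold has bounded generalized mean curvature ($\Lambda=n|\l|$) and unit multiplicity, so Allard's theorem applies at its density-one points, $\p^*E$ is a $C^{1,\a}$ hypersurface, and Schauder estimates for the constant-mean-curvature equation upgrade it to a smooth embedded hypersurface with classical mean curvature $n\l$.

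I would then pin down the sign of $\l$ and extract a Minkowski-type identity. The absence of a singular part in the first variation is exactly the tangential divergence theorem $\int_{\p^*E}\div_{\p^*E}X\,d\mcH^n=n\l\int_{\p^*E}\langle X,\nu_E\rangle\,d\mcH^n$ for $X\in C^1_c$; applying it to the position field $x$ --- after cutting off, which requires knowing $E$ is bounded --- together with the Gauss--Green formula $\int_{\p^*E}\langle x,\nu_E\rangle\,d\mcH^n=(n+1)|E|$ gives
\[
P(E)=\l\,(n+1)\,|E|.
\]
Since $|E|>0$ and $P(E)>0$ this forces $\l>0$; the value $\l=0$ is separately excluded because it would make the coordinate functions harmonic on the compact boundaryless hypersurface $\p^*E$, hence constant, which is impossible for a hypersurface bounding a set of positive finite volume.

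The core of the argument is a weak Heintze--Karcher inequality. Let $\Psi(y,t)=y-t\,\nu_E(y)$ on $\p^*E\times[0,\infty)$. For $\mcH^{n+1}$-a.e.\ $x\in E$ a nearest point on $\p E$ exists, and since $E$ contains a ball tangent to $\p E$ there, that nearest point lies automatically in $\p^*E$; writing it as $\Psi(y,t)$ with $t=\mathrm{dist}(x,\p E)$, the interior tangent ball forces $\kappa_i(y)\le 1/t$ for all $i$, hence $t\le 1/\l$ because $\sum_i\kappa_i(y)=n\l$. The area formula combined with the AM--GM inequality applied to the nonnegative numbers $1-t\kappa_i(y)$, whose average is $1-t\l$, gives $|J\Psi|(y,t)=\prod_i(1-t\kappa_i(y))\le(1-t\l)^n$, so
\[
|E|\ \le\ \int_{\p^*E}\int_0^{1/\l}(1-t\l)^n\,dt\,d\mcH^n\ =\ \frac{P(E)}{(n+1)\l}.
\]
Comparing with the identity $P(E)=\l(n+1)|E|$, both inequalities are equalities. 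Equality in AM--GM forces $\kappa_1(y)=\dots=\kappa_n(y)=\l$ at every $y\in\p^*E$, so each connected component of $\p^*E$ is a sphere of radius $1/\l$ and $\p E=\p^*E$; equality in the covering step forces the corresponding balls to be pairwise essentially disjoint and to fill $E$. Since $P(E)<\infty$ there are finitely many of them, and $E$ agrees up to a Lebesgue-null set with a finite union of disjoint balls of radius $1/\l$, which is the desired conclusion --- the boundedness of $E$ now being an a posteriori fact.

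I expect the main obstacle to be making the Heintze--Karcher step rigorous in the measure-theoretic setting: one must justify that $\mcH^{n+1}$-a.e.\ point of $E$ is reached by $\Psi$ from a \emph{regular} boundary point, that the Jacobian bound and the curvature inequality $\kappa_i\le 1/t$ hold $\mcH^n$-a.e.\ on $\p^*E$ (using crucially that the distributional and classical mean curvatures agree there), and that the rigidity analysis is stable under these a.e.\ statements. The companion delicate point is showing, \emph{before} one knows $E$ is a union of balls, that $E$ is bounded and $\l>0$; only with these in hand is the Minkowski identity --- and hence the matching of the two displayed inequalities --- legitimate. The remaining ingredients (regularity via Allard--Schauder, the Gauss--Green formula, the AM--GM step) are standard.
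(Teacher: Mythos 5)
The paper does not prove this theorem; it is quoted verbatim from Delgadino--Maggi \cite{DM19}, and the introduction only sketches their strategy. Comparing your proposal against that sketch is still instructive, because your route diverges from theirs in exactly the place where the hard work lives, and the divergence is not a stylistic choice but a gap.

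Your plan is the naive Montiel--Ros transplant: Allard plus Schauder to make $\p^*E$ a smooth CMC hypersurface, Minkowski to fix the sign of $\l$, then push the map $\Psi(y,t)=y-t\nu_E(y)$ inward from $\p^*E$ and run AM--GM on the Jacobian. The step on which everything hinges is the one you flag yourself at the end: that $\mcH^{n+1}$-a.e.\ $x\in E$ has a nearest point on $\p E$ which lies in $\p^*E$, with well-defined principal curvatures there. Your justification --- ``since $E$ contains a ball tangent to $\p E$ there, that nearest point lies automatically in $\p^*E$'' --- is precisely what fails to be automatic. An interior tangent ball gives a one-sided density bound and a lower curvature barrier, but it does not place the foot point in the reduced boundary, nor does Allard apply there without knowing the varifold density is close to one. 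Making this rigorous is essentially the content of \cite[Lemma 7]{DM19} (quoted as \cref{Prop-DM7} above), which is why Delgadino--Maggi abandon the forward map from the boundary altogether and instead prove the $C^{1,1}$-rectifiability of the \emph{level sets} $\G_s^+$ of the distance function, then run a Heintze--Karcher inequality \`a la Brendle's monotonicity formula on those level sets rather than on $\p^*E$ directly. In other words, the obstacle you identify as ``the main obstacle'' is not a technical wrinkle to smooth over at the end: it is the theorem's raison d'\^etre, and bypassing it is what \cref{Thm-C11Rec} and its Euclidean ancestor are for.

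There is a second, independent gap in the rigidity step. From equality in AM--GM you conclude that $\p^*E$ is pointwise umbilic and jump to ``each connected component is a sphere.'' For an open, possibly non-compact, smooth hypersurface whose closure may carry a bad set $\p E\setminus\p^*E$ of positive $\mcH^n$-measure a priori, this global conclusion does not follow from pointwise umbilicity; one must rule out that pieces of spheres glue along the singular set. This is exactly why Delgadino--Maggi invoke Sch\"atzle's strong maximum principle for codimension-one integer rectifiable varifolds \cite[Theorem 6.2]{Schatzle04} as their second main tool --- an ingredient entirely absent from your proposal. Finally, the boundedness of $E$ and the positivity of $\l$, which you need \emph{before} running Heintze--Karcher, are flagged but not supplied; for a set that is only of finite perimeter and finite volume neither is free, and in \cite{DM19} both are extracted from the rectifiability and monotonicity machinery rather than assumed. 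So the proposal captures the Montiel--Ros heuristic that motivates \cite{DM19}, but it omits the two theorems --- level-set $C^{1,1}$-rectifiability and Sch\"atzle's maximum principle --- whose combination is what actually turns that heuristic into a proof.
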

Their advanced techniques to approach this problem are two-fold: 1. By using subtle nonlinear analysis and geometric measure theory, they established the following $C^{1,1}$-rectifiability result of the level sets of the distance function from the boundary of any bounded sets of finite perimeter in $\mfR^{n+1}$.
\begin{proposition}[{\cite[Lemma 7]{DM19}}]\label{Prop-DM7}
   If $\Omega$ is an open set with finite perimeter and finite volume in $\mfR^{n+1}$, then $\Omega_s=\{y\in\Omega:u(y)>s\}$ is an open set of finite perimeter with $\mcH^n(\p\Omega_s\setminus\Gamma_s^+)=0$ for a.e. $s>0$, here $u(y)={\rm dist}(y,\p\Omega)$ is the distance function from $\p\Omega$, defined for every $y\in\Omega$, $\Gamma_s^+=\bigcup_{t>0}\Gamma_s^t$, where $\Gamma_s^t$ is defined as
   \begin{align}
       \Gamma_s^t=\left\{y\in\p\Omega_s:y=(1-\frac{s}{t})x+\frac{s}{t}z\quad\text{for some }z\in\p\Omega_t,x\in\p\Omega\right\}.
   \end{align}
   Moreover, for every $s>0$, $\Gamma_s^+$ can be covered by countably many graphs of $C^{1,1}$-functions from $\mfR^n$ to $\mfR^{n+1}$.
\end{proposition}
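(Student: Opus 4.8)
The plan is to run everything through the distance function $u$, using only two soft tools --- the coarea formula and a two-sided tangent-ball (positive-reach) criterion for $C^{1,1}$-regularity --- and in particular to avoid any use of the regularity of $\p\Omega$, which for a general set of finite perimeter may be far from rectifiable.

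\emph{Step 1 (finite perimeter of $\Omega_s$).} I would extend $u$ by $0$ on $\p\Omega$, so that $u$ is $1$-Lipschitz on $\overline\Omega$ with $|\nabla u|=1$ a.e.\ on $\Omega$. The coarea formula gives $\int_0^\infty\mcH^n(u^{-1}(s))\,ds=\int_\Omega|\nabla u|=|\Omega|<\infty$, so $\mcH^n(u^{-1}(s))<\infty$ for a.e.\ $s>0$. Since $\Omega_s$ is open and $u$ continuous, $\p\Omega_s\subseteq\{u=s\}$ (any $y\in\p\Omega_s$ has $u(y)=s$, so $y\notin\p\Omega$ as $s>0$), hence $\mcH^n(\p\Omega_s)<\infty$ and, by Federer's criterion (finite $\mcH^n$-measure of the topological boundary implies finite perimeter), $\Omega_s$ has finite perimeter, for a.e.\ $s>0$.

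\emph{Step 2 (local $C^{1,1}$ structure of $\Gamma_s^t$, $t>s$).} For $y\in\Gamma_s^t$ I would first check, using that $u$ is $1$-Lipschitz, that the point $x\in\p\Omega$ in the representation $y=(1-\tfrac{s}{t})x+\tfrac{s}{t}z$ can be taken to be a nearest point of $\p\Omega$ to $z$, so that $|x-z|=t$; setting $\nu=(z-x)/t\in\mbS^n$ one then has $y=x+s\nu$, $z=x+t\nu$, and $x$ is also a nearest point of $\p\Omega$ to $y$. From $u(z)=t$ one gets $B(z,t)\subseteq\Omega$, hence $|z-x'|\ge t$ for every $x'\in\p\Omega$; likewise $u(x)=0$ gives $B(x,t)\cap\overline{\Omega_t}=\es$. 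For two points $y_1,y_2\in\Gamma_s^t$ with data $(x_i,z_i,\nu_i)$, expanding $|z_i-x_j|^2\ge t^2=|z_i-x_i|^2$, adding the two inequalities and substituting $x_i=y_i-s\nu_i$ yields
\begin{equation*}
|\nu_1-\nu_2|\le\frac{|y_1-y_2|}{\min\{s,\,t-s\}},
\end{equation*}
so that $y\mapsto(\nu(y),x(y),z(y))$ is single-valued and Lipschitz on $\Gamma_s^t$. Moreover $B(z(y),t-s)\subseteq\Omega_s$ and $B(x(y),s)\subseteq\mfR^{n+1}\setminus\Omega_s$, both with $y$ on their boundary (so their centres are colinear with $y$): this is a two-sided tangent-ball condition of radii $t-s$ and $s$, which forces $\Gamma_s^t$ to be a $C^{1,1}$-hypersurface with normal field $\nu$. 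Partitioning $\mbS^n$ into finitely many small caps and then breaking up into countably many sufficiently small pieces, each piece is a graph over a coordinate hyperplane whose gradient --- a Lipschitz function of $\nu$ composed with the bi-Lipschitz graph parametrization --- is Lipschitz; and since $\Gamma_s^t$ is decreasing in $t$, $\Gamma_s^+=\bigcup_{t>s}\Gamma_s^t=\bigcup_{k\ge1}\Gamma_s^{s+1/k}$ is covered by countably many graphs of $C^{1,1}$-maps $\mfR^n\to\mfR^{n+1}$, for every $s>0$.

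\emph{Step 3 ($\mcH^n(\p\Omega_s\setminus\Gamma_s^+)=0$ for a.e.\ $s$ --- the main obstacle).} The hard part is to show that $\Gamma_s^+$ exhausts $\p\Omega_s$ up to an $\mcH^n$-null set for a.e.\ $s$; the complement consists of ``cut points at level $s$'', where the minimizing segment from the nearest point on $\p\Omega$ cannot be prolonged past $y$. I would argue that ${\rm dist}^2(\cdot,\p\Omega)$ is locally semiconcave on $\Omega$, so that the cut locus $\mcC$ of $\p\Omega$ inside $\Omega$ is $\mcH^{n+1}$-negligible, and that every $y\in\Omega\setminus\mcC$ has a unique, prolongable nearest-point segment, whence $y\in\Gamma_{u(y)}^+\cap\p\Omega_{u(y)}$. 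Applying the coarea formula to $u$ on $\Omega\setminus\mcC$ then gives $\int_0^\infty\mcH^n(\Gamma_s^+\cap\p\Omega_s)\,ds\ge|\Omega\setminus\mcC|=|\Omega|$, while Step 1 gives $\int_0^\infty\mcH^n(\p\Omega_s)\,ds\le|\Omega|$; since $\mcH^n(\Gamma_s^+\cap\p\Omega_s)\le\mcH^n(\p\Omega_s)$ for every $s$, the two bounds force equality for a.e.\ $s$, i.e.\ $\mcH^n(\p\Omega_s\setminus\Gamma_s^+)=0$. The two nontrivial inputs --- the $\mcH^{n+1}$-negligibility of the cut locus of a \emph{merely closed} set, and the fact that off this null set $\Omega$ is foliated by the open minimizing segments (which is what makes the coarea density visible inside $\Gamma_s^+$) --- are, I expect, where the real work lies, precisely because $\p\Omega$ carries no regularity.
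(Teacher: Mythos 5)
Your Steps 1 and 2 are correct, and the Lipschitz estimate at the heart of Step 2 is a genuinely direct route. Expanding and adding $|z_i-x_j|^2\ge t^2=|z_i-x_i|^2$ gives $(x_2-x_1)\cdot(z_2-z_1)\ge0$; substituting $x_i=y_i-s\nu_i$, $z_i=y_i+(t-s)\nu_i$ and using Cauchy--Schwarz yields $s(t-s)|\nu_1-\nu_2|^2-|t-2s|\,|y_1-y_2|\,|\nu_1-\nu_2|-|y_1-y_2|^2\le0$, whose positive root is precisely $|y_1-y_2|/\min\{s,t-s\}$, confirming your bound with an explicit global constant on all of $\Gamma_s^t$ at once. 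This is leaner than the argument used in the sphere adaptation of this paper (Step~2 of the proof of \cref{Thm-C11Rec}), which first proves only the quadratic estimate $|\nu(y)\cdot(y'-y)|\le C|y'-y|^2$, bootstraps to a $C^1$ graph, and then runs an Egoroff decomposition into compact pieces $U_j$ with uniformly converging density ratios in order to establish Lipschitzness of $N$ locally with piece-dependent constants $C_j$; the codimension-two embedding is what makes the sphere version laborious, but in the Euclidean codimension-one setting of the statement your one-line bound does the work. The one overstatement is ``forces $\Gamma_s^t$ to be a $C^{1,1}$-hypersurface'' --- $\Gamma_s^t$ is only a subset of one --- and the clean finish is to pair your Lipschitz bound with the quadratic bound $|\nu(y)\cdot(y'-y)|\le|y'-y|^2/(2\min\{s,t-s\})$ (also a consequence of the two tangent balls), apply the $C^{1,1}$-Whitney extension theorem to obtain $\phi\in C^{1,1}(\mfR^{n+1})$ with $(\phi,\nabla\phi)=(s,\nu)$ on $\Gamma_s^t$, and conclude by the implicit function theorem; the countable covering then follows, as you say, from $\Gamma_s^+=\bigcup_k\Gamma_s^{s+1/k}$.

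Step 3 is where the genuine gap lies, and you flag it yourself. The coarea sandwich is the right skeleton, but it rests entirely on the $\mcH^{n+1}$-negligibility of the \emph{full} cut locus of a merely closed set $\p\Omega$ --- not just the set of points with several nearest points (which follows from semiconcavity of $u^2$), but also the endpoints of the maximal foot-point segments, where $y+\eta\nu(y)$ ceases to have $\xi(y)$ as nearest point for every small $\eta>0$. For a general Borel $\p\Omega$ this is not an off-the-shelf fact, and proving it would itself be the bulk of the step. Both Delgadino--Maggi and this paper (\cref{Prop3-summary}\textbf{(2)}) sidestep the cut locus: one observes that $f_r(y)=y+r\nu(y)$ is a \emph{surjection} of $\Gamma_s^{s+r}$ onto $\p\Omega_{s+r}$ for each $r>0$ (every $y'\in\p\Omega_{s+r}$ has a foot point $x\in\p\Omega$ at distance $s+r$, and the point on $\overline{xy'}$ at distance $s$ from $x$ lies in $\Gamma_s^{s+r}$ with $z=y'$), so the area formula plus the two-sided curvature bounds $-1/s\le\kappa_i\le 1/r$ from the tangent balls give $\mcH^n(\p\Omega_{s+r})\le(1+r/s)^n\,\mcH^n(\Gamma_s^{s+r})\le(1+r/s)^n\,\mcH^n(\Gamma_s^+)$. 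Averaging over $r\in(0,\epsilon)$, using the coarea formula and Lebesgue differentiation for the $L^1$ function $s\mapsto\mcH^n(\p\Omega_s)$, and sending $\epsilon\to0^+$ yields $\mcH^n(\p\Omega_s)\le\mcH^n(\Gamma_s^+)$ for a.e.\ $s$, and since $\Gamma_s^+\subseteq\p\Omega_s$ this gives $\mcH^n(\p\Omega_s\setminus\Gamma_s^+)=0$ with no cut-locus input whatsoever. I would replace your Step 3 with this argument.
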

The $C^{1,1}$-rectifiability of $\Gamma_s^t$ allows them to define the principal curvatures a.e. on $\Gamma_s^+$, which are bounded from above and by below due to the definition of $\Gamma_s^t$. As a by-product, they followed the proof of \cite{Bre13} and derived a Heintze-Karcher inequality for sets of finite perimeter (\cite[Theorem 8]{DM19}). We note that although \cref{Prop-DM7} is stated in the context of sets of finite perimeter,
the statement remains valid when $\Omega$ is only a bounded open set, we refer to the details of the proof in \cite[Step1,2,3]{DM19}.

2. They exploited the Sch\"attzle's strong maximum principle for the codimension-1 integer rectifiable varifolds \cite[Theorem 6.2]{Schatzle04} and showed that the flow method used by Montiel-Ros in \cite[Theorem 3]{MR91} to prove the Heintze-Karcher inequality for $C^2$-closed hypersurfaces can be modified to apply in the context of sets of finite perimeter, thus proved their main theorem \cref{Thm-DM19-1}.
	
It is worth mentioning that aforementioned works on the isoperimetric problems are contextualized in the Euclidean space. In view of \cite{Rei80,Ros87,MR91}, a natural question arises: is there any characterization of geodesic balls as the only critical points in the isoperimetric problem that is brought up in space forms?

Motivated by this natural question and the celebrated work \cite{DM19}, in this note, we follow the subtle nonlinear analysis carried out by Delgadino-Maggi and prove a $C^{1,1}$-rectifiability result in $(\mfS^{n+1},g_{\mfS^{n+1}})$.
On the other hand, we manage to prove a Heintze-Karcher type inequality for any open set that is contained in a hemisphere, enlightened by Brendle's monotonicity approach \cite{Bre13}. To introduce our main results, let us first fix some notations.

Let $(\mfS^{n+1},g_{\mfS^{n+1}})$ be the space form with sectional curvature which is identically 1, for simplicity, we abbreviate it by $\mfS^{n+1}$ in the rest of this paper. Let ${\rm dist}_g$ denote the distance function of $\mfS^{n+1}$.

For any nonempty open set $\Omega\subset\mfS^{n+1}$, we define $u(y)={\rm dist}_g(y,\p\Omega)$ to be the distance function with respect to $\p\Omega$ on $\mfS^{n+1}$.
For $s>0$, we define the super level-set and the level-set of $u$ in $\Omega$ by
\begin{align}\label{defn-Omegas}
    \Omega_s=\{y\in\Omega:u(y)>s\},\quad\p\Omega_s=\{y\in\Omega:u(y)=s\}.
\end{align}
As a parallel version of the Euclidean one in \cite{DM19}, we introduce the following definitions.
\begin{definition}[$\Gamma_s^t$ and $\Gamma_s^+$]\label{defn-Gamma_s^t}
\normalfont
For any nonempty open set $\Omega\subset\mfS^{n+1}$, and for every $0<s<t<\pi$,
define $\Gamma_s^t$ to be the set of points of $\Om$ such that: for $y\in\Om$, there holds
\begin{enumerate}
    \item ${\rm dist}_g(y,\p\Omega)=s$,
    \item there exists a unit-speed geodesic $\gamma_y:[s-t,s]\ra\mfS^{n+1}$, with $\gamma(0)=y$ and ${\rm dist}(\gamma(r),\p\Om)=s-r$ for every $r\in[s-t,s]$.
\end{enumerate}
Moreover, for every $0<s<\pi$, let
    \begin{align*}
\Gamma_s^+:=\bigcup_{s<t<\pi}\Gamma_s^t.
	\end{align*}
\end{definition}
\subsection{Main results}

The main purpose of this paper is to establish the following $C^{1,1}$-rectifiability result, which extends \cref{Prop-DM7} to $\mfS^{n+1}$.
\begin{theorem}\label{Thm-C11Rec}
For any nonempty open set $\Omega\subset\mfS^{n+1}$ and for every $0<s<t<\pi$,
there exists a countable collection $\{\mcU_j\}_{j\geq1}$ of compact subsets of $\Gamma_s^t$ such that $\mcH^n(\Gamma_s^t\setminus\bigcup_{j=1}^\infty \mcU_j)=0$, with each $\mcU_j$ contained in a $C^{1,1}$-hypersurface.
Moreover, denoting by $N$ the gradient of the distance function with respect to $u(\cdot)={\rm dist}_g(\cdot,\p\Omega)$, then $N\mid_{\mcU_j}$ is Lipschitz for every $j\geq1$.
\end{theorem}
With the $C^{1,1}$-rectifiability in force, the principal curvatures $(\kappa_s^t)_i$ of $\Gamma_s^t$, the viscosity boundary $\p^\nu\Omega$ of $\Omega$, and the viscosity mean curvature $H_\Omega^\nu$ of $\Omega$ are thus naturally defined in \cref{Prop3-summary}, \cref{Defn-2ndfundamentalform} and \cref{Defn-ViscosityMeanConvex}, see also \cite[Lemma 7]{DM19} for the Euclidean version.

Consequently, following Brendle's monotonicity approach \cite[Section 3]{Bre13}, we prove the following Heintze-Karcher type inequality on $\mfS^{n+1}$.

\begin{theorem}[Heintze-Karcher inequality on Sphere]\label{Thm-HK}
	If $\Om\subset\mfS^{n+1}$ is an open set lying completely in a hemisphere, which is mean convex in the viscosity sense as in \cref{Defn-ViscosityMeanConvex}, then for every $0<s<\frac{\pi}{2}$,
	\begin{align}\label{HK-INEQ}
		\int_{s}^{\frac{\pi}{2}}\cos{r}\mcH^n(\p\Om_r)\rd r\leq\frac{n}{n+1}\int_{\Gamma_s^+}\frac{\cos{s}}{H_{\Om_s}}\rd\mcH^n.
	\end{align}
	Moreover, as $s\ra0^+$,
	the limit of the right-hand side of \eqref{HK-INEQ} always exists in $(0,\infty]$.
\end{theorem}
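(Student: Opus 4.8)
The plan is to adapt the Montiel--Ros/Brendle argument \cite{MR91,Bre13} to the equidistant foliation of $\Om_s$: run the normal exponential map off $\p\Om_s$, compare a weighted volume of $\Om_s$ with the $\mcH^n$-integral of its Jacobian over $\Gamma_s^+$, and then optimize pointwise by the arithmetic--geometric mean inequality. First, by \cref{Thm-C11Rec} and the objects built in \cref{Prop3-summary}, \cref{Defn-2ndfundamentalform} and \cref{Defn-ViscosityMeanConvex}, for $\mcH^n$-a.e.\ $y\in\Gamma_s^+$ the principal curvatures $(\kappa_s)_i(y)$ of $\p\Om_s$ at $y$ (with respect to $\nabla u$) exist; and whenever $y\in\Gamma_s^t$ with witnesses $x\in\p\Om$, $z\in\p\Om_t$, the geodesic ball $B_{t-s}(z)\subset\Om_s$ touches $\p\Om_s$ from the inside at $y$, so $(\kappa_s)_i(y)\le\cot(t-s)$. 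Moreover viscosity mean-convexity of $\Om$ propagates along the normal flow --- in $\mfS^{n+1}$ the mean curvature $H$ of the equidistant sets satisfies the Riccati inequality $H'=|S|^2+n\ge H^2/n+n$ --- so $H_{\Om_s}:=\sum_i(\kappa_s)_i>0$ on $\Gamma_s^+$ for every $s\in(0,\tfrac\pi2)$, which makes $1/H_{\Om_s}$ in \eqref{HK-INEQ} meaningful. All of this is the $\mfS^{n+1}$-analogue of what is used in \cite[Lemma 7]{DM19}.

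Next, put $\Phi(y,\tau)=\exp_y(\tau\nabla u(y))$ for $y\in\Gamma_s^+$. Since $\Om$ lies in a hemisphere, $u\le\tfrac\pi2$ on $\Om$, so by the coarea formula the left-hand side of \eqref{HK-INEQ} equals $\int_{\Om_s}\cos(u(y))\,d\mcH^{n+1}(y)$. Every $w\in\Om_s$ lies on the minimizing geodesic to its nearest boundary point $x\in\p\Om$, which crosses $\p\Om_s$ at a point $y\in\Gamma_s^{u(w)}$ with $w=\Phi(y,u(w)-s)$; hence $\Phi$ maps $A:=\{(y,\tau): y\in\Gamma_s^+,\ 0<\tau<\tau^*(y)\}$ onto $\Om_s$ up to an $\mcH^{n+1}$-null set, where $\tau^*(y):=\sup\{t-s: y\in\Gamma_s^t\}\le\tfrac\pi2-s$, and on $A$ the Jacobian of $\Phi$ is $\prod_i(\cos\tau-(\kappa_s)_i(y)\sin\tau)$, which is positive since $(\kappa_s)_i(y)\le\cot(t-s)<\cot\tau$ whenever $\tau<t-s$. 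As $u=s+\tau$ along $\tau\mapsto\Phi(y,\tau)$, the area formula yields
\begin{align}\label{plan-cmp}
\int_{\Om_s}\cos(u)\,d\mcH^{n+1}\le\int_{\Gamma_s^+}\Big(\int_0^{\tau^*(y)}\cos(s+\tau)\prod_{i=1}^n\big(\cos\tau-(\kappa_s)_i(y)\sin\tau\big)\,d\tau\Big)\,d\mcH^n(y).
\end{align}

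Then I would estimate the inner integral pointwise. By AM--GM, on $(0,\tau^*(y))$ --- where all factors are positive --- one has $\prod_i(\cos\tau-(\kappa_s)_i\sin\tau)\le(\cos\tau-\tfrac{H_{\Om_s}}{n}\sin\tau)^n$; setting $\phi:=\arctan(H_{\Om_s}/n)$, the substitution $\theta=\tau+\phi$ together with $\cos(s+\tau)=\cos s\cos\tau-\sin s\sin\tau$ turns the inner integral into an elementary one, and one obtains
\begin{align*}
\int_0^{\tau^*(y)}\cos(s+\tau)\Big(\cos\tau-\tfrac{H_{\Om_s}}{n}\sin\tau\Big)^n d\tau\le\frac{n}{n+1}\cdot\frac{\cos s}{H_{\Om_s}},
\end{align*}
the tight case $\tau^*(y)=\operatorname{arccot}(H_{\Om_s}/n)$ reducing to the elementary inequality $\int_\phi^{\pi/2}\cos^{n+1}\theta\,d\theta\le\tfrac{\cos^{n+2}\phi}{(n+1)\sin\phi}$ (checked by differentiating in $\phi$), while if $\tau^*(y)<\tfrac\pi2-s$ the discarded tail of the integrand is non-negative. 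Combining this with \eqref{plan-cmp} proves \eqref{HK-INEQ}.

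Finally, for the last assertion set $R(s):=\tfrac{n}{n+1}\cos s\int_{\Gamma_s^+}H_{\Om_s}^{-1}\,d\mcH^n$. Pushing forward by the normal flow $\Psi_\delta\colon\Gamma_s^+\to\p\Om_{s+\delta}$ (which covers $\Gamma_{s+\delta}^+$), using the Riccati solution $(\kappa_{s+\delta})_j\circ\Psi_\delta=\tfrac{\sin\delta+(\kappa_s)_j\cos\delta}{\cos\delta-(\kappa_s)_j\sin\delta}$ and AM--GM once more, one gets $R(s+\delta)\le R(s)$ for $0<s<s+\delta<\tfrac\pi2$ --- this is Brendle's monotonicity. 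Hence $\lim_{s\to0^+}R(s)=\sup_s R(s)$ exists, and, provided $\Om_s\ne\es$ for some $s>0$, it lies in $(0,\infty]$ since then $R(s)\ge\int_{\Om_s}\cos u\,d\mcH^{n+1}>0$ for such $s$ (if $\Om_s=\es$ for all $s>0$, then \eqref{HK-INEQ} and its right side are trivially $0$). The point I expect to be delicate is the surjectivity ``up to a null set'' of $\Phi$ onto $\Om_s$ and the validity of the area formula behind \eqref{plan-cmp}: one must discard the cut locus of $\p\Om$ inside $\Om$ and the unrectifiable part $\p\Om_r\setminus\Gamma_r^+$, which is precisely where \cref{Thm-C11Rec} and the $\mfS^{n+1}$-version of \cite[Steps 1--3]{DM19} do the work; once that is in place, the trigonometric computations are routine.
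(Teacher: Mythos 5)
Your proposal takes a genuinely different route for the main inequality. The paper follows Brendle's monotonicity scheme verbatim: it defines $Q(s)=\int_{\Gamma_s^+}\frac{\cos s}{H_{\Om_s}}\,d\mcH^n$ and its approximations $Q^t(s)$ on each $\Gamma_s^t$, computes $(Q^t)'(s)$ via the explicit Jacobian of the bijection $g_r:\Gamma_s^t\to\Gamma_{s-r}^t$ from \cref{Prop3-summary}(3), applies Cauchy--Schwarz $H^2\le n|A|^2$ and the lower curvature bound $(\kappa_s)_i\ge-\cot s$ to get $(Q^t)'(s)\le-\frac{n+1}{n}\int_{\Gamma_s^t}\cos s\,d\mcH^n$, and integrates. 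You instead go the Montiel--Ros way: coarea to rewrite the left side as a weighted volume, the normal exponential map $\Phi$ off $\Gamma_s^+$, the area formula, AM--GM on the Jacobian, and then a pointwise one-variable trigonometric bound. Both are legitimate, and your last paragraph (for the existence of the limit) reproduces the paper's monotonicity argument anyway, so the two proofs are not fully disjoint; but the mechanism producing \eqref{HK-INEQ} itself is different.

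Two points in your sketch are not yet complete. First, your pointwise estimate is only justified in one of the two relevant regimes. Writing $\phi=\arctan(H_{\Om_s}/n)$, you reduce to $\int_\phi^{\pi/2}\cos^{n+1}\theta\,d\theta\le\frac{\cos^{n+2}\phi}{(n+1)\sin\phi}$, which governs the case $\tau^*=\operatorname{arccot}(H/n)\le\frac\pi2-s$, i.e.\ $\phi\ge s$. When $\phi<s$ (i.e.\ $H_{\Om_s}<n\tan s$) the integral stops at $\tau^*\le\frac\pi2-s<\operatorname{arccot}(H/n)$ because $\cos(s+\tau)$ changes sign, and ``the discarded tail is non-negative'' no longer lets you extend to $\operatorname{arccot}(H/n)$. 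One must then verify $\int_0^{\pi/2-s}\cos(s+\tau)\bigl(\cos\tau-\tfrac{H}{n}\sin\tau\bigr)^n d\tau\le\frac{n\cos s}{(n+1)H}$ directly; this does hold (it follows from monotonicity of $D(\phi):=\frac{\cos^{n+2}\phi}{(n+1)\sin\phi}-\int_\phi^{\pi/2}\cos^{n+1}\theta\,d\theta$ and the identity $D(\frac\pi2-\delta)+\int_0^\delta\sin^{n+1}\alpha\,d\alpha=\frac{\sin^{n+2}\delta}{(n+1)\cos\delta}$), but it is an extra computation you did not supply. Second, the application of the area formula to $\Phi$ over $\Gamma_s^+\times(0,\tau^*(y))$ presupposes more regularity than is globally available: $N=\nabla u$ is continuous on each $\Gamma_s^t$ and Lipschitz only on each compact piece $U_j\subset\Gamma_s^t$, so the argument must be localized to the $U_j$'s and then pushed to $\Gamma_s^+$ by the monotone limit $\Gamma_s^t\downarrow\Gamma_s^+$ — precisely the $Q^t\to Q$ approximation the paper sets up, which the monotonicity route handles cleanly and the Montiel--Ros route would need to duplicate. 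So your approach works, but it does not save any of the machinery, and it needs the missing case in the trigonometric inequality to be filled in.
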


Our strategy for proving the main rectifiability theorem follows largely from \cite{DM19} and is as follows:
by isometrically embedding $\mfS^{n+1}$ into $\mfR^{n+2}$, our goal becomes: to show that the aforementioned $\Gamma_s^t$ is a $n$-rectifiable set in $\mfR^{n+2}$. 
Using the Hinge version of Topogonov theorem, we obtain an estimation of $\vert N(y)\cdot (y'-y)\vert$, where $y',y\in\Gamma_s^t$ and $N(y)$ is the derivative of the distance function at $y$, which will be proved to be well-defined everywhere on $\Gamma_s^t$ in \cref{Prop-Gammast},
both $y,y'$, and $N(y)$ are considered as vectors in $\mfR^{n+2}$, $`\cdot'$ denotes the standard Euclidean inner product.
By virtue of this basic estimation, we can use the $C^1$-Whitney extension theorem and the $C^1$-implicit function theorem to show that $\Gamma_s^t$ is $C^1$-rectifiable.
To prove the $C^{1,1}$-rectifiability, in the Euclidean case, Delgadino-Maggi's proof of $C^{1,1}$-rectifiability is built on the fact that $\Gamma_s^t$ can be written as a codimension-1, $C^1$-graph in $\mfR^{n+1}$.
In our situation, the main obstacle to prove the $C^{1,1}$-rectifiability is that, the analysis of codimension-2 graph in $\mfR^{n+2}$ seems more subtle. Regarding this, our approach can be viewed as a codimension-2 counterpart of the one presented in \cite[Theorem1, step1]{DM19}.

In view of the classical rigidity result of $C^2$, CMC hypersurfaces in $\mfS^{n+1}$ \cite[part C)]{MR91},
and the Alexandrov theorem for critical points of isoperimeteric problem among sets of finite perimeter \cref{Thm-DM19-1} (see in particular \cite[Theorem1, step4]{DM19}),
it is interesting to see whether one can establish an Alexandrov theorem on $\mfS^{n+1}$ among sets of finite perimeter,
we hope that our rectifiability result can serve as a fundamental step for solving this interesting open problem.
On the other hand, we believe our codimension-2 analysis can be used in a wider range of problems that deal with graphs on $\mfS^{n+1}$.

{\color{black}As pointed out to us by a referee, our rectifiability theorem can be deduced quite
easily from the corollary of \cite[Theorem 4.12]{MS19}, which asserts that \textit{if $S$ is a closed subset of $\mfR^{n+1}$ and $S^\ast$ is the set of $x\in S$ such that there exists an open ball $B_x$ with $B_x\cap S=\emptyset$ and $x\in\p B_x$, then $S^\ast$ can be $\mcH^n$-almost covered by the union of countable collection of $C^2$-hypersurfaces.}
This theorem readily implies that (using local charts) all the level-sets of the distance function from an arbitrary closed set in a complete $(n+1)$-dimensional Riemannian manifold can be $\mcH^n$-almost covered by the union of embedded $C^2$-hypersurfaces.
This fact, together with the applications of a few standard tools in Geometric Measure Theory, yields \cref{Thm-C11Rec} even when the standard sphere $(\mfS^{n+1},g_{\mfS^{n+1}})$ is replaced by an arbitrary complete Riemannian manifold and for all $0<s<t<\infty$.
In this regard, our proof of \cref{Thm-C11Rec} serves as an alternative approach by using the nonlinear analysis arguments.}

\subsection{Organization of the paper}

In \cref{Sec-2} we collect some background material from geometric measure theory. In \cref{Sec3} we study the fine properties of $\Gamma_s^t$. In \cref{Sec4} we prove the main rectifiability result \cref{Thm-C11Rec}. In \cref{Sec-5}, we define the viscosity mean curvature and boundary of any open set in $\mfS^{n+1}$, and we establish a Heintze-Karcher type inequality \cref{Thm-HK}.

\subsection{Acknowledgements}

I would like to express my deep gratitude to my advisor Chao Xia for many helpful discussions, constant encouragement, and for bringing \cite{DM19} to my attention. I would wish to thank Wenshuai Jiang, Liangjun Weng, and Xiaohan Jia for several helpful discussions. I would also like to thank Mario Santilli for the enlightening discussions and for informing the related works on the $C^{1,1}$-rectifiability theorem and the Heintze-Karcher inequality and to thank the anonymous referee for reading the manuscript and providing useful comments which help improve the exposition of this paper.

\section{Rectifiable sets}\label{Sec-2}

The main purpose of this note is to establish the rectifiability result, here we list some fundamental concepts and tools that are needed in the sequel, and we refer to \cite{Sim83, DeL08,Mag12} for more details. We must point out that, by virtue of the embedding $\mfS^{n+1}\hookrightarrow\mfR^{n+2}$, in most of this paper, we will be working in $\mfR^{n+2}$, and we use $\mcH^k$ to denote the $k$-dimensional Hausdorff measure in $\mfR^{n+2}$.
\subsection{Rectifiable set}

\begin{definition}[$n$-rectifiable set, {\cite[Section 2.1]{DM19}}]
\normalfont
    A Borel set $N\subset\mfR^{n+2}$ is a \textit{locally $\mcH^n$-rectifiable set} if $N$ can be covered, up to a $\mcH^n$-negligible set, by countably many Lipschitz images of $\mfR^n$ to $\mfR^{n+2}$, and if $\mcH^n\llcorner N$ is locally finite on $\mfR^{n+2}$.
    $N$ is called \textit{$\mcH^n$-rectifiable} if in addition, $\mcH^n(N)<\infty$.
\end{definition}

\subsection{Area formula and Coarea formula}

\begin{proposition}[{Area formula for $k$-rectifiable sets, \cite[Theorem 11.6]{Mag12},\cite[(12.4)]{Sim83}}]
    For $1\leq k\leq m$, if $A\subset\mfR^n$ is a $\mcH^k$-rectifiable set and $f:\mfR^n\ra\mfR^m$  is a Lipschitz map, then
	\begin{align}\label{Areaformula}
		\int_{\mfR^m}\mcH^0\left(A\cap \left\{ f
		=y\right\} \right)\rd\mcH^k(y)=\int_A J^Af(x)\rd\mcH^k(x),
	\end{align} 
	where $\left\{ f:=y\right\}=\left\{x\in\mfR^n: f(x)=y\right\}$, $J^Af(x)$ is the Jacobian of $f$ with respect to $A$ at $x$(see for example \cite[(12.3)]{Sim83} and \cite[(11.1)]{Mag12}), which exists for $\mcH^k$-a.e. $x\in A$.
\end{proposition}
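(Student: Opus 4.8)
This is a classical result (see the references cited in the statement); the plan is to show how it reduces to the Euclidean area formula for Lipschitz maps between Euclidean spaces, which I take as known. Recall that $J^Af(x)$ denotes the Jacobian of the restriction of the differential $df_x$ to the approximate tangent plane $T_xA$; it is defined for $\mcH^k$-a.e.\ $x\in A$ precisely because a rectifiable set admits an approximate tangent plane at $\mcH^k$-a.e.\ point, and the hypothesis $k\le m$ is what makes this tangential Jacobian the relevant quantity. First I would unwind rectifiability: there are Lipschitz maps $g_j\colon\mfR^k\ra\mfR^n$ and an $\mcH^k$-negligible set $N$ with $A\subset N\cup\bigcup_j g_j(\mfR^k)$, and a standard disjointification --- $E_j:=A\cap g_j(\mfR^k)\setminus\big(N\cup\bigcup_{i<j}g_i(\mfR^k)\big)$ --- exhibits $A$ as the disjoint union of the $\mcH^k$-measurable sets $E_j$ together with the $\mcH^k$-negligible set $N':=A\cap N$. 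Invoking the structure theorem for rectifiable sets and absorbing a further $\mcH^k$-null set into $N'$, I would refine this so that each $E_j$ is an $\mcH^k$-measurable subset of an embedded $k$-dimensional $C^1$-submanifold $\Sigma_j\subset\mfR^n$, with $\mcH^k\llcorner E_j$ finite; the gain is that for $\mcH^k$-a.e.\ $x\in A$ the approximate tangent plane $T_xA$ exists, equals $T_x\Sigma_j$ for the a.e.\ unique $j$ with $x\in E_j$, and --- composing a bi-Lipschitz chart of $\Sigma_j$ with Rademacher's theorem --- $f|_{\Sigma_j}$ is differentiable $\mcH^k$-a.e.\ on $E_j$, so that $J^Af$ is unambiguously defined $\mcH^k$-a.e.\ on $A$, independently of the chosen covering.

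Next, for fixed $j$, I would choose a bi-Lipschitz parametrization $\varphi\colon B\ra\Sigma_j$ with $B\subset\mfR^k$ Borel and $E_j\subset\varphi(B)$, and set $A_j:=\varphi^{-1}(E_j)$. Then $f\circ\varphi\colon A_j\ra\mfR^m$ is Lipschitz and the Euclidean area formula gives
\[
\int_{\mfR^m}\mcH^0\big(A_j\cap(f\circ\varphi)^{-1}(y)\big)\,d\mcH^k(y)=\int_{A_j}J(f\circ\varphi)\,d\mcH^k .
\]
On the left, injectivity of $\varphi$ identifies $A_j\cap(f\circ\varphi)^{-1}(y)$ with $E_j\cap\{f=y\}$. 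On the right, the chain rule factors $J(f\circ\varphi)=\big(J^Af\circ\varphi\big)\,J\varphi$ at $\mcH^k$-a.e.\ point of $A_j$ (using that the intermediate space $T\Sigma_j$ has dimension $k$), and the weighted area formula for the bi-Lipschitz map $\varphi$ rewrites $\int_{A_j}\big(J^Af\circ\varphi\big)\,J\varphi\,d\mcH^k=\int_{E_j}J^Af\,d\mcH^k$. Together these give $\int_{\mfR^m}\mcH^0\big(E_j\cap\{f=y\}\big)\,d\mcH^k(y)=\int_{E_j}J^Af\,d\mcH^k$ for every $j$.

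Finally I would sum over $j$. Since $f$ is Lipschitz, $f(N')$ is $\mcH^k$-negligible, hence $N'\cap\{f=y\}=\es$ for $\mcH^k$-a.e.\ $y$; for such $y$, disjointness of $\{E_j\}$ yields $\mcH^0(A\cap\{f=y\})=\sum_j\mcH^0\big(E_j\cap\{f=y\}\big)$. Summing the one-piece identity and applying Tonelli's theorem (the integrands are nonnegative and measurable) gives
\[
\int_{\mfR^m}\mcH^0(A\cap\{f=y\})\,d\mcH^k(y)=\sum_j\int_{E_j}J^Af\,d\mcH^k=\int_A J^Af\,d\mcH^k ,
\]
which is \eqref{Areaformula}.

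The only substantive ingredient is the first step: the structure theorem splitting a rectifiable set, up to an $\mcH^k$-null set, into countably many disjoint pieces each contained in a $C^1$-submanifold (equivalently, in a Lipschitz graph), together with the $\mcH^k$-a.e.\ existence of the approximate tangent plane --- this is exactly what turns $J^Af$ into a well-defined object and reduces the rectifiable area formula to the classical Euclidean one. The remaining ingredients (disjointification, chain-rule factorization of Jacobians, push-forward of $\mcH^k$-null sets under Lipschitz maps, and Tonelli's theorem) are routine.
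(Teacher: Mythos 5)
The paper does not prove this proposition; it is stated as a classical fact and cited directly to \cite[Theorem 11.6]{Mag12} and \cite[(12.4)]{Sim83}, so there is no in-paper proof to compare against. Your sketch is a correct reconstruction of the standard argument those references give: decompose the rectifiable set via disjointification into pieces each contained (up to a null set) in a $C^1$-submanifold, pull back along a bi-Lipschitz chart, apply the Euclidean area formula, factor the Jacobian through the tangent plane by the chain rule, and reassemble with Tonelli after noting that the leftover $\mcH^k$-null piece has $\mcH^k$-null image. The one place worth being slightly more explicit is the chain-rule factorization $J(f\circ\varphi)=(J^Af\circ\varphi)\,J\varphi$: this uses that $d\varphi_x\colon\mfR^k\ra T_x\Sigma_j$ is an isomorphism between $k$-dimensional spaces, so the Jacobian is genuinely multiplicative; you flag this (``the intermediate space has dimension $k$''), which is enough. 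Since the paper merely cites the result, your proof is an appropriate and accurate expansion of what the cited sources contain, not a divergence from the paper.
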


\begin{proposition}[{Coarea formula for $k$-rectifiable set, \cite[(12.6)]{Sim83}}]
    For $k\geq m$, if $A\subset\mfR^n$ is a $\mcH^k$-rectifiable set and $f:\mfR^n\ra\mfR^m$  is a Lipschitz map, then
	\begin{align}\label{Coareaformula-rec.set}
	\int_{\mfR^m}\mcH^{k-m}(A\cap f^{-1}(y))\rd\mcH^m(y)=\int_A J^Af(x)\rd\mcH^k(x).
	\end{align}
\end{proposition}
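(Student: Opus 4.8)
The plan is to reduce the identity \eqref{Coareaformula-rec.set} to the classical smooth coarea formula for $C^1$ maps between $C^1$ manifolds, by slicing $A$ into countably many $C^1$ pieces and absorbing the leftover $\mcH^k$-null set with the Eilenberg (coarea) inequality.

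First I would invoke the structure theorem for $\mcH^k$-rectifiable sets to write $A=A_0\cup\bigcup_{i\ge1}A_i$, where the $A_i$ are pairwise disjoint Borel sets, $\mcH^k(A_0)=0$, and each $A_i$ ($i\ge1$) is contained in an embedded $k$-dimensional $C^1$ submanifold $M_i\subset\mfR^n$. Since the approximate tangent plane of $A$ at $\mcH^k$-a.e. $x\in A_i$ equals $T_xM_i$, the tangential Jacobian satisfies $J^Af(x)=J^{M_i}f(x)$ for $\mcH^k$-a.e. $x\in A_i$. Both sides of \eqref{Coareaformula-rec.set} are countably additive over this decomposition — the right-hand side by additivity of the integral, the left-hand side by monotone convergence, since the integrands $\mcH^{k-m}(A_i\cap f^{-1}(y))$ are nonnegative and the slices $A_i\cap f^{-1}(y)$ disjoint — so it suffices to treat $A_0$ and each $A_i$ separately. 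For $A_0$ the right-hand side is $0$, as $J^Af$ is integrated over a set of $\mcH^k$-measure zero, while the Eilenberg coarea inequality
\begin{align*}
\int^*_{\mfR^m}\mcH^{k-m}\big(E\cap f^{-1}(y)\big)\,d\mcH^m(y)\le c(k,m)\,(\mathrm{Lip}\,f)^m\,\mcH^k(E),
\end{align*}
applied with $E=A_0$, forces $\mcH^{k-m}(A_0\cap f^{-1}(y))=0$ for $\mcH^m$-a.e.\ $y$, so the left-hand side vanishes too.

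Next I would fix $i\ge1$, write $M=M_i$ and $A=A_i\subset M$, and work with the Lipschitz restriction $f|_M\colon M\ra\mfR^m$, which by Rademacher's theorem (in $C^1$ charts) is tangentially differentiable $\mcH^k$-a.e.\ on $M$, with coarea factor $J^Mf(x)=\sqrt{\det\big(d^Mf_x\circ(d^Mf_x)^\ast\big)}$. Using Federer's Lusin-type $C^1$-approximation of Lipschitz maps, for each $\e>0$ there are a $C^1$ map $g_\e\colon M\ra\mfR^m$ and a compact set $K_\e\subset M$ with $\mcH^k(M\setminus K_\e)<\e$, $f=g_\e$ on $K_\e$, and $d^Mf=d^Mg_\e$ $\mcH^k$-a.e.\ on $K_\e$. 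On $K_\e$ the classical smooth coarea formula gives $\int_{\mfR^m}\mcH^{k-m}\big((A\cap K_\e)\cap f^{-1}(y)\big)\,d\mcH^m(y)=\int_{A\cap K_\e}J^Mf\,d\mcH^k$. Choosing $\e=\e_j\downarrow0$ with $K_{\e_j}$ increasing to a full-measure subset of $A$, the right-hand side converges to $\int_AJ^Mf\,d\mcH^k=\int_AJ^Af\,d\mcH^k$ by monotone (or dominated, as $J^Mf$ is bounded in terms of $\mathrm{Lip}\,f$) convergence together with the first step. On the left-hand side, monotone convergence gives $\mcH^{k-m}((A\cap K_{\e_j})\cap f^{-1}(y))\uparrow\mcH^{k-m}(A\cap f^{-1}(y))$ for $\mcH^m$-a.e.\ $y$, while the Eilenberg inequality applied to $E=A\setminus K_{\e_j}$ shows the discarded part contributes at most $c\,(\mathrm{Lip}\,f)^m\,\mcH^k(A\setminus K_{\e_j})\to0$ to the $\mcH^m$-integral; combining these yields \eqref{Coareaformula-rec.set} for $A=A_i$, and summing over $i$ finishes the proof.

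The main obstacle is this last step: the simultaneous limit on the two sides of the smooth identity. One must know that the tangential Jacobian of $f$ relative to the abstract rectifiable set $A$ agrees $\mcH^k$-a.e.\ with the Jacobian of the restriction $f|_M$ to the approximating $C^1$ manifold — which is precisely why the coincidence of approximate tangent planes in the first step is needed — and one must pass to the limit without destroying the fibers $A\cap f^{-1}(y)$ that carry positive $\mcH^{k-m}$-measure; the Eilenberg coarea inequality is exactly the device that makes the "error" fibers $\mcH^m$-negligible. Everything else — the structure theorem for rectifiable sets, Rademacher's theorem, the $C^1$-approximation lemma, and the smooth coarea formula for $C^1$ maps between manifolds — is classical and would be cited from \cite{Sim83,Mag12} (and Federer); indeed, in the present paper the statement is simply quoted from \cite[(12.6)]{Sim83}, the above being the standard route to it.
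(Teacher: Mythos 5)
Your argument is correct and is essentially the proof behind the reference the paper quotes: the paper offers no proof of this coarea formula, simply citing \cite[(12.6)]{Sim83}, and your route --- decomposing the rectifiable set into pieces of $C^1$ submanifolds, identifying $J^Af$ with the coarea factor of $f$ restricted to those manifolds via the coincidence of (approximate) tangent planes, applying the smooth coarea formula after a Lusin-type $C^1$ approximation, and absorbing the exceptional $\mcH^k$-null sets and the discarded pieces with the Eilenberg coarea inequality --- is the standard argument carried out in \cite{Sim83} and in Federer. There is nothing further to compare.
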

The following proposition is also needed in our codimension-2 argument,
%\begin{proposition}
%    If $u:(\mfS^{n+1},g)\ra\mfR^1$ is a Lipschitz function and $A\subset \mfS^{n+1}$ is open, then there holds
%	\begin{align}\label{Coareaformula-mfld}
		%\int_A\vert\nabla u\vert\geq\int_{\mfR^1}P\left(\left\{u>t\right\};A \right) dt.
	%\end{align}
	%Here we use the notations in \cref{Sec-2-2}, we point out that the Euclidean version is exactly \cite[Theorem 13.1]{Mag12}, and we can follow directly the proof of the Euclidean one, for the sake of clarification, we present the proof in \cref{Appendix-2}.
%\end{proposition}
{\color{black}which amounts to be a simple modification of \cite[Section 2.1(iv)]{DM19}.
The proof follows exactly from \cite{DM19} and hence is omitted here.
\begin{lemma}\label{DM-2.1(iv)}
    Let $M\subset\mfR^{n+2}$ be a locally $\mcH^n$-rectifiable set and $f:M\ra\mfR^{n+2}$ is a Lipschitz map defined on $M$, then for any Lipschitz functions $F,G:\mfR^{n+2}\ra\mfR^{n+2}$ such that $F=G=f$ on $M$, we have
    \begin{align}
        \nabla^MF=\nabla^MG\quad\mcH^n\text{-a.e. on }M. 
    \end{align}
    In particular, if $\psi:\mfR^n\ra\mfR^{n+2}$ is a Lipschitz map and $E\subset\mfR^n$ is a Borel set,
    then $T_xM=(\nabla\psi)_{\psi^{-1}(x)}[\mfR^n]$ for $\mcH^n$-a.e. $x\in M\cap\psi(E)$, with
    \begin{align}\label{DM-2.5}
        (\nabla^MF)_x[\tau]=\nabla(F\circ\psi)_{\psi^{-1}(x)}[(\nabla\psi)_x^{-1}[\tau]]\quad\forall\tau\in T_xM.
    \end{align}
\end{lemma}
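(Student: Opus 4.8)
The plan is to reduce everything to the case of a $C^1$-submanifold and then combine Rademacher's theorem with the locality of the tangential gradient, exactly as in the Euclidean argument of \cite[Section 2.1]{DM19}. Since $M$ is locally $\mcH^n$-rectifiable, the structure theorem lets us write $M\subset M_0\cup\bigcup_{k\in\nz}\Sigma_k$ with $\mcH^n(M_0)=0$ and each $\Sigma_k$ an embedded $n$-dimensional $C^1$-submanifold of $\mfR^{n+2}$, in such a way that the approximate tangent plane $T_xM$ exists and equals the classical tangent plane $T_x\Sigma_k$ for $\mcH^n$-a.e.\ $x\in M\cap\Sigma_k$ (see \cite[Chapter 5]{Sim83}). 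For a Lipschitz map $F\colon\mfR^{n+2}\ra\mfR^{n+2}$, the restriction $F|_{\Sigma_k}$ is Lipschitz on the $C^1$-manifold $\Sigma_k$; reading it in $C^1$-charts and applying Rademacher's theorem shows that $F|_{\Sigma_k}$ is tangentially differentiable $\mcH^n$-a.e.\ on $\Sigma_k$, which produces the $\mcH^n$-a.e.\ defined linear map $(\nabla^MF)_x\colon T_xM\ra\mfR^{n+2}$.

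For the first assertion I would set $h=F-G$, a Lipschitz map on $\mfR^{n+2}$ that vanishes identically on $M$, and show $\nabla^Mh=0$ $\mcH^n$-a.e.\ on $M$. Fix $k$; at $\mcH^n$-a.e.\ $x\in M\cap\Sigma_k$ one has that $T_xM=T_x\Sigma_k$ exists, $h|_{\Sigma_k}$ is tangentially differentiable at $x$, and $x$ is a point of $\mcH^n$-density $1$ of $M\cap\Sigma_k$ relative to $\Sigma_k$. This last property lets one realise every unit $v\in T_xM$ as an approach direction, i.e.\ find $y_j\in M\cap\Sigma_k$ with $y_j\ra x$ and $(y_j-x)/\vert y_j-x\vert\ra v$; since $h(y_j)=h(x)=0$, dividing the first-order tangential expansion of $h$ at $x$ by $\vert y_j-x\vert$ and letting $j\ra\infty$ forces $(\nabla^Mh)_x[v]=0$. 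As $v$ is arbitrary, $(\nabla^Mh)_x=0$, hence $\nabla^MF=\nabla^MG$ $\mcH^n$-a.e.; in particular, for any Lipschitz $f\colon M\ra\mfR^{n+2}$ one may unambiguously set $\nabla^Mf:=\nabla^MF$ for any Lipschitz extension $F$ of $f$ to $\mfR^{n+2}$.

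For the second assertion, first pass to the Borel set $E^+\subset E$ of points $a$ where $\psi$ is differentiable with $\mathrm{rank}\,(\nabla\psi)_a=n$. By Rademacher's theorem and the area formula \eqref{Areaformula}, $\psi(E\setminus E^+)$ is $\mcH^n$-null and the fibres $\psi^{-1}(x)\cap E^+$ are finite for $\mcH^n$-a.e.\ $x$. Splitting $E^+$ into countably many Borel pieces on each of which $\psi$ is injective — the standard reduction showing Lipschitz images are rectifiable — gives that $\psi(E^+)$ is $\mcH^n$-rectifiable with approximate tangent plane $(\nabla\psi)_a[\mfR^n]$ at $\psi(a)$ for $\mcH^n$-a.e.\ $a\in E^+$; since $M$ and $\psi(E^+)$ are $\mcH^n$-rectifiable sets sharing the positive-measure set $M\cap\psi(E^+)$, their approximate tangent planes coincide there $\mcH^n$-a.e., giving $T_xM=(\nabla\psi)_{\psi^{-1}(x)}[\mfR^n]$ for $\mcH^n$-a.e.\ $x\in M\cap\psi(E)$. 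For \eqref{DM-2.5}, again using the reduction to $C^1$-pieces one checks that at $\mcH^n$-a.e.\ such $x$ the map $F$ is in fact differentiable at $x$ along the affine plane $x+T_xM$ with differential $(\nabla^MF)_x\circ\pi_{T_xM}$; since $\psi(y)-x=(\nabla\psi)_a[y-a]+o(\vert y-a\vert)$ and this increment lies in $T_xM$, composing gives that $F\circ\psi$ is differentiable at $a=\psi^{-1}(x)$ with $\nabla(F\circ\psi)_a=(\nabla^MF)_x\circ(\nabla\psi)_a$, and inverting the isomorphism $(\nabla\psi)_a\colon\mfR^n\ra T_xM$ yields \eqref{DM-2.5}.

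The hardest part will not be any single estimate but the measure-theoretic bookkeeping: one must arrange the various countable decompositions so that all the a.e.\ statements hold simultaneously, and verify that every exceptional set — where $\psi$ or $F\circ\psi$ fails to be differentiable, where $(\nabla\psi)$ degenerates, where density-$1$ or the tangent-plane comparison fails, or where $\psi^{-1}$ is genuinely multivalued — is $\mcH^n$-negligible on $M$. The two classical auxiliary facts invoked, namely that a full-$\mcH^n$-density subset of a $C^1$-manifold realises every tangent direction as an approach direction, and that two $\mcH^n$-rectifiable sets have the same approximate tangent plane $\mcH^n$-a.e.\ on a common subset, are exactly those used in \cite[Section 2.1(iv)]{DM19}; no new phenomenon appears in passing from codimension $1$ to codimension $2$, which is why the proof transfers essentially verbatim.
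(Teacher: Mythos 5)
Your reconstruction is correct and follows exactly the argument of \cite[Section~2.1(iv)]{DM19} that the paper explicitly defers to (the paper omits the proof, citing that it carries over verbatim): decompose $M$ into $C^1$ pieces via the rectifiability structure theorem, use density-$1$ points to realise tangent directions as approach directions and conclude $\nabla^M(F-G)=0$ a.e., then pass to the rank-$n$ set of $\psi$ and invoke locality of approximate tangent planes together with tangential differentiability along $x+T_xM$ to obtain the chain rule \eqref{DM-2.5}. As you correctly observe, nothing in this argument is sensitive to the codimension, so the transfer from $\mfR^{n+1}$ to $\mfR^{n+2}$ is immediate.
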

We note that $\nabla^MF(x)$ denotes the tangential differential of $F$ with respect to $M$ at $x$, which exists for $\mcH^n$-a.e. $x\in M$ by virtue of the Rademacher-type theorem \cite[Theroem 11.4]{Mag12}. 
}

To close this section, we list some well-known facts about the space form $\mfS^{n+1}$, which will be needed in our proof. Note that part of them are already mentioned in the introduction.
\subsection{Geometry of \mathinhead{(\mfS^{n+1},g_{\mfS^{n+1}})}{S,g}}

In this paper, we consider the unit sphere $\mfS^{n+1}\subset\mfR^{n+2}$, endowed with the Riemannian metric $g_{\mfS^{n+1}}$ that is induced from the isometrically embedding $\mfS^{n+1}\hookrightarrow(\mfR^{n+2},g_{\rm euc})$, and we have the following basic facts:
\begin{enumerate}
	\item $\mfS^{n+1}$ is a smooth, complete, compact Riemannian manifold without boundary, having constant sectional curvature which is identically 1.
	\item The injective radius of $\mfS^{n+1}$ is $\pi$, i.e., ${\rm inj}(\mfS^{n+1})=\pi$.
	\item The only geodesics on $\mfS^{n+1}$ are great circles.
	\item For $x\neq z\in\mfS^{n+1}$ with ${\rm dist}_g(x,z)<\pi,$ there exists a unique minimizing unit-speed geodesic joining $x$ and $z$.
	\item The geodesic ball of radius $r$ centered at some $x\in\mfS^{n+1}$, denoted by $\mcB_r(x)$, is umbilical in $\mfS^{n+1}$ with constant principal curvatures $\cot r$.
\end{enumerate}

%%%%%%%%%%%%%%%%%%%%%%%%%%%%%%%%
%%%%%%%%%%%%%%%%%%%%%%%%%%%%%%%%%%%%%%%%%%%%%%%%%%%%%%%%%%%%%%%%%%%%%%%%%%%%%%%%%%%%%%%%%%%%%%%%%%%%%%%%%%%%%%%%%%%%%%%%%%%%%%%%%%%%%%%%%%%%%%%%%%%%%%%%%%%%%%%%%%%%%%%%%%%%%%%%%%%%%%%%%%%%%%%%

\section{Fine Properties of \mathinhead{\Gamma_s^t}{Gamma-s-t}}\label{Sec3}
In this section, we explore the fine properties of $\Gamma_s^t$.
For any nonempty open set $\Omega\subset\mfS^{n+1}$, we define $\Gamma_s^t$ and $\Gamma_s^+$ as in \cref{defn-Gamma_s^t}. Recall that for any $y\in\Omega$, we define $u(y)={\rm dist}_g(y,\p\Omega)$ to be the distance function from $\p\Omega$. Following \cite{Fed59}, we define the unique point projection mapping on $\mfS^{n+1}$, see \cite[Definition 4.1]{Fed59} for the Euclidean version.
\begin{definition}
    \normalfont
For any nonempty open set $\Omega\subset\mfS^{n+1}$, let ${\rm Unp}(\p\Omega)$ be the set of all those points $y\in \Omega$ for which there exists a unique point of $\p\Omega$ nearest to $y$, and the map
\begin{align}
    \xi:{\rm Unp}(\p\Omega)\ra \p\Omega
\end{align}
associates with $y\in{\rm Unp}(\p\Omega)$ the unique $x\in \p\Omega$ such that $u(y)={\rm dist}_g(x,y)$.
\end{definition}
Our first observation is that $\Gamma_s^t\subset{\rm Unp}(\p\Omega)$.
\begin{lemma}\label{Lem-Gammast1}
    For any nonempty open set $\Omega\subset\mfS^{n+1}$ and for every $0<s<t<\pi$, let
    $\Gamma_s^t$ be as in \cref{defn-Gamma_s^t}.
    Then, for any $y\in\Gamma_s^t$, it has a unique point projection onto $\p\Om$, which reads as $\xi(y)=x$; in other words, $\Gamma_s^t\subset{\rm Unp}(\p\Om)$.
    Similarly, it has a unique point projection onto $\p\Om_t$.
\end{lemma}
\begin{proof}
By definition, for any $y\in \Gamma_s^{t}$,
there exists $x=\gamma_y(s)\in\p\Om$ and $z=\gamma_y(s-t)\in\p\Om_{t}$.

Since $y\in\p\Omega_s$, if there exists $x'\neq x\in\p\Omega$ such that ${\rm dist}_g(x',y)=s$, then by the triangle inequality
we have
	\begin{align*}
		{\rm dist}_g(x',z)< {\rm dist}_g(x',y)+{\rm dist}_g(y,z)
		=s+t-s
		=t,
	\end{align*}
	contradicts to the fact that $z\in\p\Omega_t$. Therefore, we have showed that $x$ is the unique point of $\p\Omega$ nearest to $y$; that is, $\xi(y)=x$.

On the other hand, we note that by using the triangle inequality again, it is easy to see that $z$ is the unique point in $\p\Om_t$ such that ${\rm dist}_g(y,z)=t-s$.
\end{proof}
Now that we have showed that $\Gamma_s^t\subset{\rm Unp}(\p\Omega)$, we can explore the unique point projection mapping $\xi$ on $\Gamma_s^t$. Indeed, we have the following.
\begin{lemma}\label{Lem-u-xi}
	For any nonempty open set $\Om\subset\mfS^{n+1}$, the following statements hold:
	\begin{enumerate}
		\item \label{lem3.1-item1}the function $u(\cdot)={\rm dist}_g(\cdot,\p\Om)$ is a Lipschitz function on $\mfS^{n+1}$ with Lipschitz constant at most 1, i.e., for any $x,y\in \mfS^{n+1}$,
		\begin{align*}
			\vert u(y)-u(x)\vert\leq{\rm dist}_g(x,y).
		\end{align*}
		\item \label{lem3.1-item2}For $0<s<t<\pi$, ${ \xi}$ is continuous on $\Gamma_s^t$.
	\end{enumerate}
\end{lemma}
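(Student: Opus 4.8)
The plan is to treat the two assertions separately; both rest on elementary metric geometry of $\mfS^{n+1}$ together with \cref{Lem-Gammast1}.

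For the Lipschitz bound on $u$, I would first observe that $\p\Om$ is a closed subset of the compact manifold $\mfS^{n+1}$, hence compact, so that for each $x\in\Om$ the infimum defining $u(x)={\rm dist}_g(x,\p\Om)$ is attained at some $p\in\p\Om$ with $u(x)={\rm dist}_g(x,p)$. Given $x,y\in\Om$, the triangle inequality for ${\rm dist}_g$ yields
\begin{align*}
u(y)\leq{\rm dist}_g(y,p)\leq{\rm dist}_g(y,x)+{\rm dist}_g(x,p)={\rm dist}_g(x,y)+u(x),
\end{align*}
so that $u(y)-u(x)\leq{\rm dist}_g(x,y)$; exchanging the roles of $x$ and $y$ gives the reverse inequality, and the first assertion follows.

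For the continuity of $\xi$ on $\Gamma_s^t$, recall from \cref{Lem-Gammast1} that $\Gamma_s^t\subset{\rm Unp}(\p\Om)$, so $\xi$ is well-defined there; I would argue via the sequential criterion. Let $y_k\to y$ with $y_k,y\in\Gamma_s^t$, and put $x_k=\xi(y_k)\in\p\Om$. By compactness of $\p\Om$, every subsequence of $\{x_k\}$ admits a further subsequence $x_{k_j}\to x_\infty\in\p\Om$. Since $u(y_{k_j})={\rm dist}_g(y_{k_j},x_{k_j})$ and ${\rm dist}_g$ is continuous, letting $j\to\infty$ and using the Lipschitz continuity of $u$ just established gives ${\rm dist}_g(y,x_\infty)=\lim_j u(y_{k_j})=u(y)$, so $x_\infty$ realises the distance from $y$ to $\p\Om$. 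Because $y\in{\rm Unp}(\p\Om)$, this forces $x_\infty=\xi(y)$. As every subsequence of $\{x_k\}$ thus has a sub-subsequence converging to the common limit $\xi(y)$, the whole sequence satisfies $x_k\to\xi(y)$, which is precisely the continuity of $\xi$ at $y$.

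There is essentially no serious obstacle in this lemma: the first assertion is the classical $1$-Lipschitz estimate for a distance function, and the second is the standard \emph{compactness plus uniqueness of the foot point} argument. The only points requiring a little care are that $\p\Om$ is automatically compact (being closed in the compact space $\mfS^{n+1}$), so that nearest points exist and limits of nearest points remain in $\p\Om$, and that it is precisely the uniqueness supplied by \cref{Lem-Gammast1} which upgrades subsequential convergence to convergence of the full sequence.
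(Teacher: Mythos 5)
Your proof is correct and follows essentially the same approach as the paper: part (1) uses the compactness of $\p\Om$ to extract a nearest point and the triangle inequality, and part (2) uses compactness of $\p\Om$ to extract a convergent subsequence of foot points, then the uniqueness from \cref{Lem-Gammast1} to identify the limit as $\xi(y)$. The only cosmetic difference is that the paper phrases part (2) as a proof by contradiction, whereas you run the standard subsequence--of--subsequence argument directly; the underlying ideas are identical.
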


\begin{proof}
	For any $x,y\in\mfS^{n+1}$. Since $\p\Om\subset\mfS^{n+1}$ is compact, we may let $a\in\p\Om$ such that $u(x)={\rm dist}_g(a,x)$. Without loss of generality, assume that $u(y)\geq u(x)$.
	By the triangle inequality, we find
	\begin{align*}
		\vert u(y)-u(x)\vert=u(y)-u(x)\leq {\rm dist}_g(a,y)-{\rm dist}_g(a,x)\leq{\rm dist}_g(x,y),
	\end{align*}
	which proves {\bf(1)}.
	
	For {\bf(2)}, suppose on the contrary that there exists some $\ep>0$ and a sequence of points $y_1,y_2,y_3,\ldots\in\Gamma_s^t$, converges to $y\in\Gamma_s^t$, such that ${\rm dist}_g(\xi(y_i),\xi(y_j))\geq\ep$ for $i=1,2,\ldots$
	
    By definition, for each $i$, we have, for $i$ large, there holds
	\begin{align}
	    {\rm dist}_g\left(\xi(y_i),y_i\right)=u(y_i)=s.
	\end{align}
	Using the triangle inequality and the fact that $y_i$ converges to $y$, we find
	\begin{align*}
		{\rm dist}_g(\xi(y_i),y)\leq{\rm dist}_g(\xi(y_i),y_i)+{\rm dist}_g(y_i,y)=s+{\rm dist}_g(y_i,y)<s+\ep.
	\end{align*}
	This means, all the points $\{\xi(y_i)\}_i$ are lying in $\p\Om\cap\mcB_{s+\ep}(y)$, which is a bounded subset of the compact set $\p\Om$, and hence by passing to a subsequence, we can assume that $\{\xi(y_i)\}_i$ converges to some point $x\in\p\Om$.
	But then, since $u$ is continuous on $\mfS^{n+1}$, we have
	\begin{align*}
		u(y)=\lim_{i\ra\infty}u(y_i)=\lim_{i\ra\infty}{\rm dist}_g(\xi(y_i),y_i)={\rm dist}_g(x,y),
	\end{align*} 
	which implies that $x=\xi(y)$ since we have proved that $y\in\Gamma_s^t\subset{\rm Unp}(\p\Omega)$ in \cref{Lem-Gammast1}. However, this contradicts to the assumption that
	\begin{align*}
		{\rm dist}_g(x,\xi(y))=\lim_{i\ra\infty}{\rm dist}_g(\xi(y),\xi(y_i))\geq\ep,
	\end{align*}
	and hence completes the proof.
\end{proof}

\begin{remark}
	\normalfont
	When $\Om$ is contained in a Euclidean space, similar results are included in \cite[4.8(1),(4)]{Fed59}
\end{remark}

With the help of \cref{Lem-Gammast1} and \cref{Lem-u-xi}, we collect the fine properties of $\Gamma_s^t$ as follows (see \cite[Theorem1]{DM19} for the Euclidean version).

\begin{proposition}\label{Prop-Gammast}
	For any nonempty open set $\Om\subset\mfS^{n+1}$ and for every $0<s<t<\pi$,
	the following statements hold:
	\begin{enumerate}
		\item \label{prop3.1-item2}For $s<t_1<t_2<\pi$, $\Gamma_s^{t_2}\subset\Gamma_s^{t_1}$. In particular, $\Gamma_s^+=\lim_{t\ra s^+}\Gamma_s^t$.
		\item \label{prop3.1-item3}$\Gamma_s^t$ is a compact set in $\mfS^{n+1}$.
		\item \label{prop3.1-item4}for $y\in\Gamma_s^t$, $\Gamma_s^t$ is bounded by two geodesic balls in $\mfS^{n+1}$, mutually tangent at $y$, i.e.,
		\begin{align*}
			\begin{cases}
				&\mcB_{t-s}(z)\subset\Om_s\subset \mfS^{n+1}\setminus\mcB_{s}(x),\\
				&\{y\}=\p\mcB_{t-s}(z)\cap\p\mcB_s(x).
			\end{cases}
		\end{align*}
		\item \label{prop3.1-item5}the distance function $u$ is differentiable at every $y\in\Gamma_s^t$.
	\end{enumerate}
\end{proposition}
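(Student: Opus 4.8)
Sketch of the proof strategy below.

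The plan is to settle the four assertions in turn, using \cref{Lem-Gammast1,Lem-u-xi} as the standing toolkit. Fix $y\in\Gamma_s^t$: by \cref{Lem-Gammast1} it carries a unique $x=\xi(y)\in\p\Om$ and a unique $z\in\p\Om_t$ with $y\in\underline{xz}$, ${\rm dist}_g(x,y)=s$, ${\rm dist}_g(y,z)=t-s$ and ${\rm dist}_g(x,z)=t$; since $t<\pi$, $\underline{xz}$ is \emph{the} minimizing geodesic joining its endpoints. I also record that $u(z)=t$ forces $\mcB_t(z)\cap\p\Om=\es$, hence $\mcB_t(z)\subset\Om$ (as $u(z)>0$ puts $z$ in $\mathrm{int}\,\Om$), and I will freely use the $1$-Lipschitz extension ${\rm dist}_g(\cdot,\p\Om)$ of $u$ to $\mfS^{n+1}$, still denoted $u$, which agrees with the original $u$ near every point of $\Gamma_s^t\subset\mathrm{int}\,\Om$. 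For item (1), given $y\in\Gamma_s^{t_2}$ with data $x,z$, I slide $z$ back along $\underline{xz}$ to the point $z_1$ with ${\rm dist}_g(x,z_1)=t_1$; then $y\in\underline{xz_1}$ and ${\rm dist}_g(y,z_1)=t_1-s$ automatically, $u(z_1)=t_1$ follows from a triangle inequality (if $u(z_1)<t_1$ one could reach $z$ from $\p\Om$ within distance $<t_2$, against $u(z)=t_2$), and $z_1\in\mcB_{t_2}(z)\subset\Om$; hence $z_1\in\p\Om_{t_1}$ and $y\in\Gamma_s^{t_1}$, so $\Gamma_s^{t_2}\subset\Gamma_s^{t_1}$ whenever $t_1<t_2$. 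Thus $\{\Gamma_s^t\}_{s<t<\pi}$ is an increasing family as $t\downarrow s$, and $\Gamma_s^+=\bigcup_{s<t<\pi}\Gamma_s^t=\lim_{t\ra s^+}\Gamma_s^t$.

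For item (2), since $\mfS^{n+1}$ is compact it is enough to show $\Gamma_s^t$ is closed. Let $\Gamma_s^t\ni y_i\ra y$, with associated $x_i\in\p\Om$, $z_i\in\p\Om_t$. Compactness of $\p\Om$ and of $\ov\Om$ gives, along a subsequence, $x_i\ra x\in\p\Om$ and $z_i\ra z\in\ov\Om$; by continuity of $u$ one has $u(z)=t>0$, so $z\notin\p\Om$, i.e. $z\in\mathrm{int}\,\Om\subset\Om$ and $z\in\p\Om_t$, and likewise $u(y)=s>0$ gives $y\in\p\Om_s$. Because ${\rm dist}_g(x,z)=t<\pi$, the minimizing geodesics $\underline{x_iz_i}$ converge uniformly to $\underline{xz}$; as $y_i$ is the point of $\underline{x_iz_i}$ at arclength $s$ from $x_i$, passing to the limit yields $y\in\underline{xz}$ with ${\rm dist}_g(x,y)=s$ and ${\rm dist}_g(y,z)=t-s$, i.e. $y\in\Gamma_s^t$.

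For item (3) I check the three statements separately. If $p\in\mcB_{t-s}(z)$ then for every $x'\in\p\Om$, ${\rm dist}_g(p,x')\geq{\rm dist}_g(z,x')-{\rm dist}_g(z,p)>t-(t-s)=s$, and since $\p\Om$ is compact the infimum defining $u(p)$ is attained, so $u(p)>s$ \emph{strictly}; together with $p\in\mcB_t(z)\subset\Om$ this gives $\mcB_{t-s}(z)\subset\Om_s$. For the outer inclusion, $p\in\mcB_s(x)$ forces $u(p)\leq{\rm dist}_g(p,x)<s$, so $\mcB_s(x)\cap\Om_s=\es$. Finally $y\in\p\mcB_{t-s}(z)\cap\p\mcB_s(x)$ is immediate, while any $q$ in this intersection satisfies ${\rm dist}_g(x,z)\leq{\rm dist}_g(x,q)+{\rm dist}_g(q,z)=t={\rm dist}_g(x,z)$, so $q\in\underline{xz}$ by uniqueness of minimizing geodesics on $\mfS^{n+1}$, and then $q=y$ since $y$ is the unique point of $\underline{xz}$ at distance $s$ from $x$. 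Item (4) then falls out of this pinch: near $y$,
\[
  t-{\rm dist}_g(\cdot,z)\ \leq\ u\ \leq\ {\rm dist}_g(\cdot,x),
\]
both bounding functions are smooth near $y$ (their base points $z,x$ are at distances $t-s,s\in(0,\pi)$ from $y$), they take the common value $s=u(y)$ at $y$, and their gradients at $y$ coincide --- both equal the unit tangent to $\underline{xz}$ at $y$ pointing away from $x$, since $y$ lies strictly between $x$ and $z$ --- so a standard sandwich argument makes $u$ differentiable at $y$ with that gradient.

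I do not expect a serious obstacle; the proposition is essentially bookkeeping around \cref{Lem-Gammast1,Lem-u-xi}. The two spots that need genuine care are the passage to the limit in item (2) --- one must keep the limit point inside $\mathrm{int}\,\Om$ and invoke uniform convergence of minimizing geodesics, both legitimate because $\mathrm{inj}(\mfS^{n+1})=\pi$ and every distance in play is $<\pi$ --- and the \emph{strict} inequality in item (3), which is exactly what compactness of $\p\Om$ provides. Everything truly spherical is confined to two of the geometric facts recalled at the end of \cref{Sec-2}: uniqueness of minimizing geodesics within distance $\pi$, and smoothness of ${\rm dist}_g(\cdot,p)$ away from $p$ and its antipode, the latter being precisely what legitimizes the two envelopes in item (4).
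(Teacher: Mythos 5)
Your proof is correct and follows essentially the same route as the paper: item (1) by sliding $z$ along the geodesic and deriving a contradiction from the triangle inequality if $u(z_1)<t_1$, item (2) by extracting convergent subsequences from the compact sets $\p\Om$ and $\ov\Om$, item (3) by the triangle inequality against ${\rm dist}_g(\cdot,z)\geq u\geq0$, and item (4) by pinching $u$ between the two smooth functions $t-{\rm dist}_g(\cdot,z)$ and ${\rm dist}_g(\cdot,x)$. The paper is terse on (3) and (4) --- it only cites the triangle inequality and the fact $y\in{\rm Unp}(\p\Om)$ --- and your explicit pinch between $\mcB_{t-s}(z)$ and $\mcB_s(x)$ together with the sandwich argument for differentiability is the intended reasoning spelled out in a self-contained way.
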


\begin{proof}
	\noindent\textbf{(1)}
	The first part of the statement follows easily from the definition of $\Gamma_s^t$, while by virtue of the inclusion, it is apparent that $\Gamma_s^+=\lim_{t\ra s^+}\Gamma_s^t$.
	
	\noindent{\textbf{(2)}} It suffice to prove that $\Gamma_s^t$ is a closed set in $\mfS^{n+1}$, i.e., if a sequence of points in $\Gamma_s^t$, say $\{y_i\}_{i=1}^\infty$, converges to $y$, then it must be that $y\in\Gamma_s^t$.
	
	By definition of $\Gamma_s^t$, for each $y_i$, there exists corresponding points $x_i\in\p\Om, z_i\in\p\Om_t$. By \cref{Lem-u-xi}, $\xi$ is continuous on $\Gamma_s^t$, and hence we have: $\{x_i\}_{i=1}^\infty$ is a Cauchy sequence\footnote{By Cauchy sequence we mean, for any $\epsilon>0$, there exists some positive integer $N$, such that for any $m,n\geq N$, there holds ${\rm dist}_g(x_m,x_n)<\epsilon$. This shows that $\{x_i\}_{i=1}^\infty$ is a bounded sequence in $\p\Omega$.} in $\p\Om$. Notice that $\p\Om$ is closed, hence $\{x_i\}_{i=1}^\infty$ converges to some $x\in\p\Om$. Similarly, $\{z_i\}_{i=1}^\infty$ converges to some $z\in\p\Om_t$. 
	
	By continuity, we have
	\begin{align*}
		u(y)=\lim_{i\ra\infty}u(y_i)=s,\quad{\rm dist}_g(x,y)=\lim_{i\ra\infty}{\rm dist}_g(y_i,\xi(y_i))=s.
	\end{align*}
	Similarly, we deduce that ${\rm dist}_g(y,\p\Om_t)={\rm dist}_g(y,z)=t$.
	By using the triangle inequality, we find
	\begin{align*}
		t=u(z)\leq{\rm dist}_g(x,z)\leq{\rm dist}_g(x,y)+{\rm dist}_g(y,z)=t,
	\end{align*}
	it is easy to see that the geodesic joining $x$ and $z$ must pass through $y$, and hence $y\in\Gamma_s^t$ by definition. 
	
	\noindent\textbf{(3)} can be deduced from the definition of $\Gamma_s^t$;
	%\textbf{(1)} and the triangle inequality, 
	\textbf{(4)} is a direct consequence of the fact that $y\in\Gamma_s^t\subset{\rm Unp}(\p\Omega)$.
\end{proof}
\begin{remark}
    \normalfont
    The lemmas and propositions in this section can be extended easily to the case when $(\mfS^{n+1},g_{\mfS^{n+1}})$ is replaced by any complete Riemannian manifold.
\end{remark}
%-------------------------------------------------------
\section{\mathinhead{C^{1,1}}{C11}-rectifiability of \mathinhead{\Gamma_s^t}{Gamma-s-t}}\label{Sec4}
In this section, we finish the proof of \cref{Thm-C11Rec}.
As mentioned in the introduction, our proof is based on the isometrically embedding $\mfS^{n+1}\hookrightarrow\mfR^{n+2}$.
We point out that, in the rest of the paper, we will be working in $\mfR^{n+2}$. For every $0<s<t<\pi$, and for any nonempty open set $\Om\subset\mfS^{n+1}$, we define $\Gamma_s^t,\Gamma_s^+,\p\Omega_s$ as in \cref{defn-Gamma_s^t}, \eqref{defn-Omegas}, respectively. By virtue of \cref{Prop-Gammast}(4), $u$ is differentiable at $y$, and we denote by $N(y)$ its gradient, which belongs to $T_y\mfS^{n+1}$. In all follows, thanks to the embedding, $N(y)$ will be considered as a vector in $\mfR^{n+2}$.

The following well-known fact motivates our estimation.
\begin{lemma}\label{Lem-xyz-N}
	For any $y\in\Gamma_s^t$, let $x=\gamma_y(s)\in\p\Omega$ and $z=\gamma_y(s-t)\in\p\Omega_t$ be the corresponding points that $y$ admits.
	%By virtue of \cref{Prop-Gammast}(4),
	%$N(y):=\nabla u(y)\in T_y\mfS^{n+1}$ %is well-defined and is viewed as a unit vector in $\mfR^{n+2}$.
	Then there holds
	\begin{align}
	    &N(y)=-\frac{x+\cos sy}{\sin s}=-\frac{1}{\sin s}x+\frac{\cos s}{\sin s}y,\label{eq-N-xy}\\
	    &z=\frac{y+\tan{(t-s)} N(y)}{\frac{1}{\cos{(t-s)}}}=\cos{(t-s)} y+\sin{(t-s)}N(y)\label{eq-N-yz}.
	\end{align}
\end{lemma}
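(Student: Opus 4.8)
The plan is to work entirely inside the ambient $\mfR^{n+2}$ using the embedding $\mfS^{n+1}\hookrightarrow\mfR^{n+2}$ as the unit sphere, and to reconstruct the geodesic $\underline{xz}$ explicitly as a great-circle arc. Recall that for the unit sphere, the unique minimizing geodesic from a point $p$ with initial unit velocity $v\in T_p\mfS^{n+1}$ (so $v\cdot p=0$, $|v|=1$) is $\gamma(r)=\cos r\, p+\sin r\, v$, and it realizes $\mathrm{dist}_g(p,\gamma(r))=r$ for $0<r<\pi$. First I would fix $y\in\Gamma_s^t$ with its corresponding $x\in\p\Om$, $z\in\p\Om_t$, which are unique by \cref{Lem-Gammast1}, and note from \textbf{Claim 0} there that $\mathrm{dist}_g(x,y)=s$, $\mathrm{dist}_g(y,z)=t-s$, $\mathrm{dist}_g(x,z)=t$, with $y\in\underline{xz}$.

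Next I would identify $N(y)$. By \cref{Prop-Gammast}\,\eqref{prop3.1-item5}, $u$ is differentiable at $y$, and $N(y)=\nabla u(y)\in T_y\mfS^{n+1}$ is a unit vector pointing along the geodesic away from $\p\Om$, i.e.\ along $\underline{xz}$ in the direction from $x$ towards $z$ (this is the standard eikonal fact $|\nabla u|=1$ on points of differentiability of a distance function, together with the first variation formula identifying the gradient with the velocity of the minimizing geodesic). Parametrize the geodesic through $x$ in the direction of $N(y)$ as $\gamma(r)=\cos r\,x+\sin r\, v$ where $v\in T_x\mfS^{n+1}$ is the initial unit velocity; then $\gamma(s)=y$ and $\dot\gamma(s)=N(y)$. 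From $y=\gamma(s)=\cos s\,x+\sin s\, v$ I solve $v=(y-\cos s\,x)/\sin s$, and then $N(y)=\dot\gamma(s)=-\sin s\,x+\cos s\,v$; substituting the expression for $v$ and simplifying (using $x\cdot x=1$, $x\cdot y=\cos\mathrm{dist}_g(x,y)=\cos s$) gives
\begin{align*}
	N(y)=-\sin s\,x+\cos s\cdot\frac{y-\cos s\,x}{\sin s}=-\frac{x+\cos s\,y}{\sin s},
\end{align*}
which is \eqref{eq-N-xy}. For \eqref{eq-N-yz}, I continue along the same geodesic, now reparametrized from the base point $y$ with velocity $N(y)$: the point at arclength $t-s$ from $y$ along $\underline{xz}$ is $\cos(t-s)\,y+\sin(t-s)\,N(y)$, and since $z\in\underline{xz}$ with $\mathrm{dist}_g(y,z)=t-s$ and $z$ lies on the $x$-to-$z$ side (so the velocity at $y$ is indeed $N(y)$, not $-N(y)$), this point is exactly $z$, giving \eqref{eq-N-yz}.

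The only genuinely delicate point is justifying that $\nabla u(y)=N(y)$ is the unit velocity of the geodesic $\underline{xz}$ at $y$ pointing \emph{from $x$ toward $z$} — i.e.\ that the sign and normalization are as claimed. For this I would argue: since $x$ is the unique nearest point of $\p\Om$ to $y$ (\cref{Lem-Gammast1}) and $u$ is differentiable at $y$, the minimizing geodesic from $x$ to $y$ extends past $y$ as a minimizer up to $z$ (because $\mathrm{dist}_g(x,z)=t=\mathrm{dist}_g(x,y)+\mathrm{dist}_g(y,z)$ forces the concatenation to be minimizing, hence a single geodesic arc since $t<\pi$); differentiating $u$ along this arc and using that $u(\gamma(r))=r$ on it yields $\nabla u(y)\cdot\dot\gamma(s)=1$, and since $|\nabla u(y)|\le 1$ by \cref{Lem-u-xi}\,\eqref{lem3.1-item1} while $|\dot\gamma(s)|=1$, Cauchy--Schwarz forces $\nabla u(y)=\dot\gamma(s)$, which is the unit velocity in the $x\to z$ direction. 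Everything else is the routine great-circle computation above; I expect no further obstacle.
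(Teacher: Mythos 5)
Your proof is correct and uses essentially the same approach as the paper, which merely asserts these are ``well-known facts'' checkable by direct computation with the hint $y-\tan s\,N(y)=\frac{1}{\cos s}x$ and a figure; you supply the underlying great-circle parametrization $\gamma(r)=\cos r\,x+\sin r\,v$ together with the eikonal/Cauchy--Schwarz argument identifying $\nabla u(y)$ with $\dot\gamma(s)$, both of which the paper leaves implicit. One small slip: your chain concludes with $-\frac{x+\cos s\,y}{\sin s}$, but what your computation actually yields is $\frac{-x+\cos s\,y}{\sin s}=-\frac{1}{\sin s}x+\frac{\cos s}{\sin s}y$, which is the \emph{second} expression in \eqref{eq-N-xy} and is the one consistent with the paper's hint; the first expression in \eqref{eq-N-xy} carries a sign typo which you inherited rather than detected.
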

\begin{proof}
	These are well-known facts and one can check by a direct computation. Notice for example that $y-\tan sN(y)=\frac{1}{\cos s}x$. See \cref{Fig-1} for an illustration.
\end{proof}
\begin{figure}[h]
	\centering
	\includegraphics[height=6cm,width=10cm]{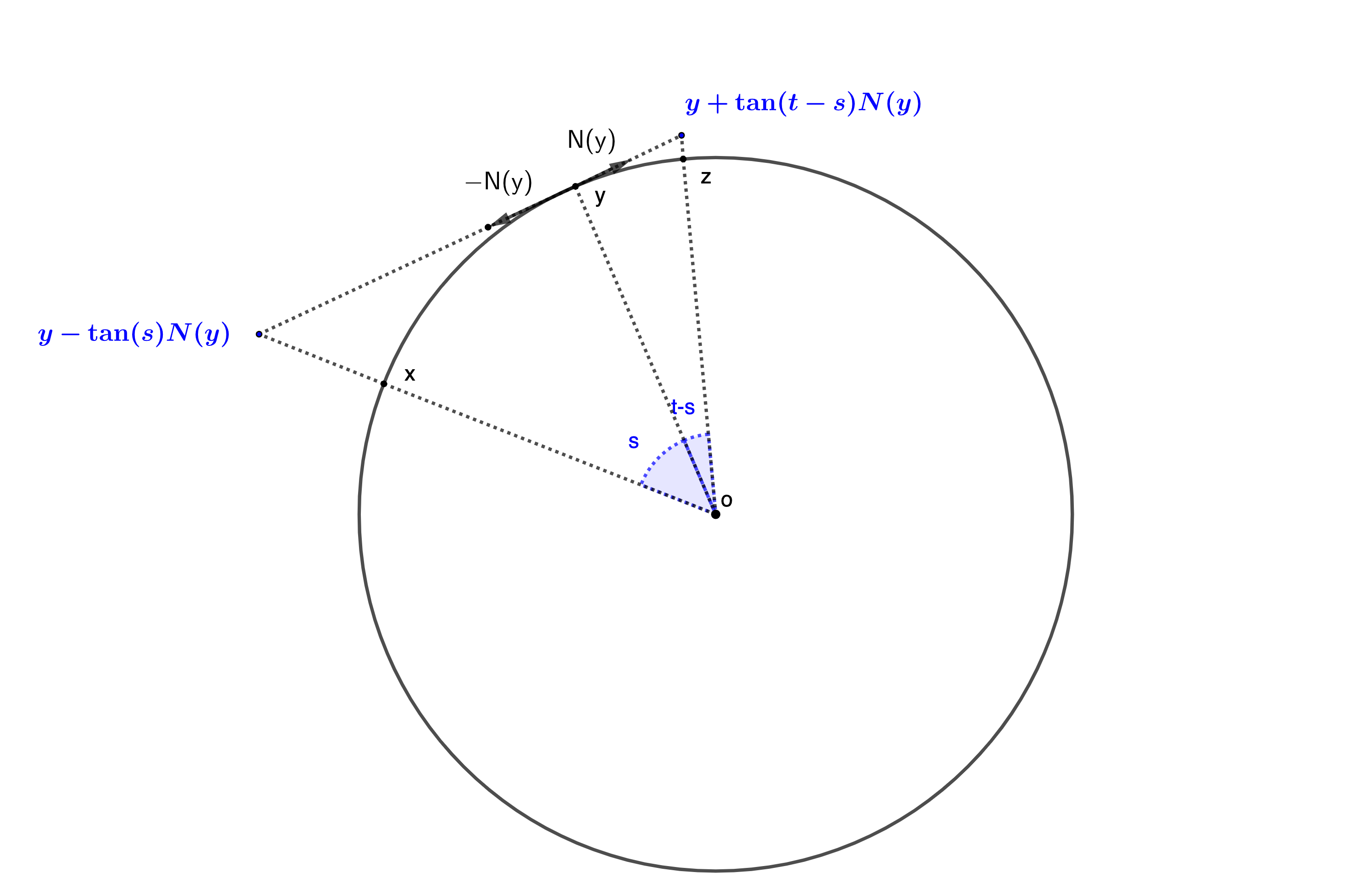}
	\caption{Relation of $x,y,z$ and $N(y)$}
	\label{Fig-1}
\end{figure}

\begin{proof}[Proof of \cref{Thm-C11Rec}]
Throughout the proof, $\vert\cdot\vert$ will denote the Euclidean norm in $\mfR^{n+2}$,
	$\nabla$ will denote the gradient in Euclidean space and $``\cdot"$ will denote the Euclidean inner product in $\mfR^{n+2}$. To make a distinction, we use $\left<\cdot,\cdot\right>$ to denote the Euclidean inner product in $\mfR^n$.
	For any two points $x\neq y\in\mfS^{n+1}$ such that ${\rm dist}_g(x,y)<\pi$, we use $\underline{xy}$ to denote the unique image of the minimizing geodesic on $\mfS^{n+1}$ joining $x$ and $y$.
	
\noindent{\bf Step1. }$C^1$-rectifiability of $\Gamma_s^t$.

	First we estimate $\left\vert N(y)\cdot(y'-y)\right\vert$ in the Euclidean space $\mfR^{n+2}$ for any $y,y'\in\Gamma_s^t$ satisfying ${\rm dist}_g(y',y)<\pi$. A key observation is that, if we denote by $\nu_{\mfS^{n+1}}$ the outwards pointing unit normal of $\mfS^{n+1}$ in $\mfR^{n+2}$, then trivially, $y=\nu_{\mfS^{n+1}}(y)$, and hence for any $y\in\Gamma_s^t\subset\mfS^{n+1}$, we have $N(y)\cdot y=0$.
	
	By \cref{Lem-Gammast1}, $y$ admits unique $x\in\p\Om,z\in\p\Om_t$. Note that $\underline{xy},\underline{yy'}$ and the interior angle between them, say $\alpha$, form an Hinge in $\mfS^{n+1}$. Now we consider a hinge in the Euclidean space, with the same lengths ${\rm dist}_g(x,y), {\rm dist}_g(y,y')$ and the interior angle $\alpha$. Note that a Euclidean hinge indeed induces a triangle, and we denote by $c$ the length of the other segment of this triangle, by the cosine theorem, we have
	\begin{align}\label{eq-hinge1}
	    c^2
	    =&{\rm dist}_g(x,y)^2+{\rm dist}_g(y,y')^2-2{\rm dist}_g(x,y){\rm dist}_g(y,y')\cos\alpha\notag\\
	    =&s^2+{\rm dist}_g(y,y')^2-2s\cdot{\rm dist}_g(y,y')\cos\alpha.
	\end{align}
	Using the hinge version of Toponogov's comparison theorem, see for example \cite[Theorem 12.2.2]{Petersen16}, we find
	\begin{align}\label{eq-hinge2}
	    {\rm dist}_g^2(x,y')^2\leq c^2.
	\end{align}
	By virtue of \eqref{eq-N-xy}, we can compute the interior angle $\alpha$ of $\underline{xy}$ and $\underline{yy'}$ at $y$, which is given by
	\begin{align*}
	    \cos\alpha=-N(y)\cdot\left(-\tilde N(y)\right)=N(y)\cdot\left(-\frac{1}{\sin\left({\rm dist}_g(y,y')\right)}y'+\cot\left({\rm dist}_g(y,y')\right)y\right),
	\end{align*}
	where $-\tilde N(y)$ denotes the initial velocity of the geodesic segment $\underline{yy'}$, which is a tangent vector at $y$. Combining this with \eqref{eq-hinge1} and \eqref{eq-hinge2}, we find 
	\begin{align*}
		{\rm dist}^2_g(x,y')
		\leq s^2+{\rm dist}_g^2(y,y')-2s\cdot{\rm dist}_g(y,y')N(y)\cdot\left(-\frac{1}{\sin\left({\rm dist}_g(y,y')\right)}y'+\cot\left({\rm dist}_g(y,y')\right)y\right),
	\end{align*}
	notice that ${\rm dist}_g(x,y')\geq s, N(y)\cdot y=0$, and hence we obtain
	\begin{align*}
		-2s\frac{{\rm dist}_g(y,y')}{\sin{\left({\rm dist}_g(y,y')\right)}}N(y)\cdot(y'-y)\leq{\rm dist}_g^2(y,y'),
	\end{align*}	
	since ${\rm dist}_g(y,y')<\pi$, we deduce that
	\begin{align}\label{N(y)(s)}
		N(y)\cdot(y'-y)\geq-\frac{1}{2s}\sin{\left({\rm dist}_g(y,y')\right)}{\rm dist}_g(y,y').
	\end{align}	
On the other hand, same computation holds for $y,y',z$ hold, notice that the interior angle in the geodesic triangle $y'yz$ at $y$ is given by $\cos\beta=N(y)\cdot(-\tilde N(y))$, thus we obtain
	\begin{align*}
		{\rm dist}_g^2(z,y')\leq(t-s)^2+{\rm dist}_g^2(y,y')
		+2(t-s)\cdot{\rm dist}_g(y,y')N(y)\cdot\left(-\frac{1}{\sin\left({\rm dist}_g(y,y')\right)}y'+\cot\left({\rm dist}_g(y,y')\right)y\right),
	\end{align*}
	notice that ${\rm dist}_g(y',z)\geq(t-s), {\rm dist}_g(y,y')<\pi$, we deduce
	\begin{align}\label{N(y)(t-s)}
		N(y)\cdot(y'-y)\leq\frac{1}{2(t-s)}\sin{\left({\rm dist}_g(y,y')\right)}{\rm dist}_g(y,y').
	\end{align}
	By \eqref{N(y)(s)} and \eqref{N(y)(t-s)} we obtain
	\begin{align}\label{Estimate-N(y)(y'-y)}
		\left\vert N(y)\cdot(y'-y)\right\vert\leq\max{\left\{\frac{1}{2s},\frac{1}{2(t-s)}\right\}}\sin{\left({\rm dist}_g(y,y')\right)}{\rm dist}_g(y,y').
	\end{align}
Since $u$ is differentiable along $\Gamma_s^t$, we see that $N$ is continuous on $\Gamma_s^t$.
Observe also
	\begin{align}\label{C^1Whitney}
		&\limsup_{\delta\ra0^+}\{\frac{\vert u(y')-u(y)-N(y)\cdot(y'-y)\vert}{\vert y'-y\vert}:0<\vert y'-y\vert\leq\delta,\quad y',y\in\Gamma_s^t\}\notag\\
		%=&\limsup_{\delta\ra0^+}\{\frac{\vert s-s-N(y)\cdot(y'-y)\vert}{\vert y'-y\vert}:0<\vert y'-y\vert\leq\delta,y',y\in\Gamma_s^t\}\\
		\leq&\limsup_{\delta\ra0^+}\left\{\frac{\max\{\frac{1}{2(t-s)},\frac{1}{2s}\}\sin({\rm dist}_g(y,y'))\cdot{\rm dist}_g(y,y')}{\vert y'-y\vert}:0<\vert y'-y\vert\leq\delta,y',y\in\Gamma_s^t\right\}
		=0,
	\end{align}
	where in the inequality we used the fact that $u(y')=u(y)=s$ and 
	\eqref{Estimate-N(y)(y'-y)}, in the equality we used the fact that as $\delta\ra0^+, {\rm dist}_g(y,y')\ra\vert y'-y\vert$ and also $\sin({\rm dist}_g(y,y'))\ra\vert y'-y\vert.$
	
	For $(u,N)\in C^0(\Gamma_s^t;\mfR\times\mfR^{n+2})$, since \eqref{C^1Whitney} holds, we may use the $C^1$-Whitney's extension theorem (see for example \cite[Section 15.2]{Mag12}) to see that there exists $\phi\in C^1(\mfR^{n+2})$ such that $(\phi,\nabla\phi)=(u,N)$ on $\Gamma_s^t$.
	
	For a fixed $y\in\Gamma_s^t$, we know that $N(y)\neq0$ by \cref{Lem-xyz-N}.
	Let $\{e_1,\ldots,e_{n+2}\}$ be the coordinate of $\mfR^{n+2}$, up to a rotation, we may assume that $y=(0,\ldots,0,1,0)=\nu_{\mfS^{n+1}}(y), N(y)=(0,\ldots,0,0,1)$. Since $\Gamma_s^t\subseteq \phi^{-1}(s)\cap\mfS^{n+1}$, we consider the following system 
	\begin{align*}
		\begin{cases}
			&f_1(x_1,\ldots,x_{n+2})=x_1^2+\ldots+x_{n+2}^2=1,\\
			&f_2(x_1,\ldots,x_{n+2})=\phi(y)=s.
		\end{cases}
	\end{align*}
	Notice that $ \nu_{\mfS^{n+1}}(y)=(0,\ldots,0,1,0),N(y)=(0,\ldots,0,0,1)$, and hence we have
	\begin{align*}
		&\p_{e_{n+1}}f_1(y)=1,\p_{e_{n+2}}f_1(y)=0,\\
		&\p_{e_{n+1}}f_2(y)=0,\p_{e_{n+2}}f_2(y)=1.
	\end{align*}
	Set $F:\mfR^n\times\mfR^2\ra\mfR^2$ by $F(x',x_{n+1},x_{n+2})=\left(f_1(x',x_{n+1},x_{n+2}),f_2(x',x_{n+1},x_{n+2})\right)$, then by the $C^1$-Implicit function theorem, there exists an open set $\mcU\subset\mfR^n$ and a $C^1$-map $\psi\in C^1(\mcU;\mfR^{2})$ such that $\Gamma_s^t\subset(x',\psi(x'))$ near $y$, i.e., $\Gamma_s^t$ lies in the $C^1$-image of $\Psi:\mcU\subset\mfR^n\ra\mfR^{n+2}$, given by $\Psi(x')=(x',\psi(x'))$. In particular, this shows the locally $\mcH^n$-rectifiability of $\Gamma_s^t$.
	%, one can check the rectifiability by verifying the definition in \cite[(10.4)]{Mag12}.
	\vspace{0.3cm}
	
	\noindent{\bf Step2. $C^{1,1}$-rectifiability of $\Gamma_s^+$.}
	
	{\color{black}Let $\mcC(N_1,N_2,\rho):=\left\{z+h_1N_1+h_2N_2:z\in {\rm span}\left\{N_1,N_2\right\}^\perp,\vert z\vert<\rho,\vert h_i\vert<\rho\right\}$ be the codimension-2 open cylinder at the origin with axis along $N_1,N_2\in T\mfS^{n+1}$, radius $\rho$ and height $2\rho$ in $\mfR^{n+2}$.
	By the fact that at any $y\in\Gamma_s^t$, $\left\{y\right\}=\p\mcB_{t-s}(z)\cap\p\mcB_s(x)$, $\nu_{\mfS^{n+1}}(y)=y$ and $\Gamma_s^t$ is locally $\mcH^n$-rectifiable, we have: $\Gamma_s^t$ admits an approximate tangent plane at $\mcH^n$-a.e. of its points and this plane is then exactly ${\rm span}\left\{N(y),\nu_{\mfS^{n+1}(y)}\right\}^\perp$, which is a $n$-dimensional affine plane in $\mfR^{n+2}$, i.e., 
	\begin{align*}
		T_y\Gamma_s^t={\rm span}\left\{N(y),y\right\}^\perp \quad\text{for }\mcH^n\text{-a.e. }y\in\Gamma_s^t. 
	\end{align*}
	By \cite[Theorem 10.2]{Mag12} and notice that for any fixed $\rho$, there exists $0<\rho_1<\rho_2$ such that $B_{\rho_1}\subset\mcC(N(y),y,\rho)\subset B_{\rho_2}$, we have
	\begin{align*}
		\lim_{\rho\ra0^+}\frac{\mcH^n\left(\Gamma_s^t\cap\left(y+\mcC\left(N\left(y\right),y,\rho\right)\right)\right)}{\om_n\rho^n}=1,\quad \text{for }\mcH^n\text{-a.e. }y\in\Gamma_s^t,
	\end{align*}
	here $\om_n$ denotes the volume of $n$-dimensional unit ball in $\mfR^{n+2}$.
	
	For a sequence $\{\rho_j\}_{j=1}^\infty$ such that $\rho_j\ra0$ as $j\ra\infty$, we set
	\begin{align*}
		f_j(y):=\frac{\mcH^n\left(\Gamma_s^t\cap\left(y+\mcC\left(N\left(y\right),y,\rho_j\right)\right)\right)}{\om_n\rho_j^n},
	\end{align*}
	then {\color{black}$f_j\ra1$} for $\mcH^n$-a.e. $y\in\Gamma_s^t$. By Egoroff's theorem and \cite[Lemma 1.1]{EG15}, there exists a compact set $\mcU_1\subset\Gamma_s^t$ such that $f_j\ra1$ uniformly on $\mcU_1$ with $\mcH^n(\Gamma_s^t\setminus \mcU_1)<\frac{1}{2}\mcH^n(\Gamma_{s}^t)$. For $\Gamma_s^t\setminus \mcU_1$, we may use Egoroff's theorem again to find a compact set $\mcU_2\subset\Gamma_s^t\setminus \mcU_1$ such that $f_j\ra1$ uniformly on $\mcU_2$ and $\mcH^n\left( \Gamma_s^t\setminus\left(\mcU_1\bigcup \mcU_2\right)\right)<\frac{1}{2^2}\mcH^n(\Gamma_s^t)$.
	By an inductive argument, we obtain a sequence of compact sets $\left\{\mcU_j\right\}_{j=1}^\infty$ such that $\mcH^n(\Gamma_s^t\setminus\bigcup_{j=1}^\infty\mcU_j)=0$ with $f_j\ra1$ uniformly on each $\mcU_j$, namely, 
	\begin{align}\label{mu_jstar}
		\mu_j^\ast(\rho):=\sup_{y\in U_j}\left\vert 1-\frac{\mcH^n\left(\Gamma_s^t\cap\left(y+\mcC\left(N\left(y\right),y,\rho\right)\right)\right)}{\om_n\rho^n}\right\vert\ra0 \quad\text{as }\rho\ra0^+.
	\end{align}
	This shows that $\Gamma_{s}^t$ can be covered by a countable union of compact sets, up to a $\mcH^n$-negligible set.
	
For every $j$ and for any $y\in \mcU_j\subset\Gamma_s^t$,
we know from the Implicit function theorem that $\mcU_j\subset\Gamma_s^t$ is contained in the graph of a $C^1$-map $\psi_j(\cdot)=(\psi_j^1(\cdot),\psi_j^2(\cdot)):\mfR^n\ra\mfR^2$ in a neighborhood of $y$.
Thanks to \eqref{mu_jstar} and \cref{Lem-appendix-2},
we may assume that, up to subdivision, rotation (so that $y=(0,\ldots,1,0)$ and $N(y)=(0,\ldots,0,1)$) of $\mcU_j$ and relabeling, there exists 
	\begin{align}\label{psi_j}
		\rho_j>0,\psi_j\in C^1({\rm span}\left\{N(y),y\right\}^\perp),
		\psi_j^1(0)=1,
		\psi_j^2(0)=0, \nabla_{z}\psi_j^i(0)=\vec{0},	%\vert\nabla_{z}\psi_j^1\vert\leq C_j,
		\vert\nabla_{z}\psi_j^i\vert\leq 1
	%\psi_j(0)=(1,0),%\nabla\psi_j(0)=0, \vert\nabla\psi_j\vert_{C^0(N(y)^\perp)}\leq1
	\end{align}
	such that: let $V_j$ denote the projection of $\mcU_j$ on ${\rm span}\left\{N(y),y\right\}^\perp\cap\{\left\vert z\right\vert<\rho_j\}$, then
	\begin{align}\label{U_j-as-localgraph}
		\mcU_j\cap\left(y+\mcC\left(N\left(y\right),y,\rho_j \right) \right)=\Gamma_s^t\cap\left(y+\mcC\left(N\left(y\right),y,\rho_j \right) \right)=y+\left\{z+\psi_j^1(z)y+\psi_j^2(z)N(y): z\in V_j\right\},
	\end{align}
	here $\rho_j, \psi_j$ depend on the choice of $y\in \mcU_j$.
	Such $\mcU_j$ satisfies that: if we set 
	\begin{align}\label{eq-muj}
	    \mu_j(\rho)=\max\left\{\mu^\ast_j(\rho),\max_{\vert z\vert\leq\rho}\vert\nabla\psi_j^i(z)\vert\right\},\quad\rho\in(0,\rho_j],
	\end{align}
	then $\mu_j(\rho)\ra0$ as $\rho\ra0^+$ by virtue of \eqref{mu_jstar} and the continuity of $\nabla \psi_j^i(i=1,2)$. In the rest of the proof, we use $C_j$ to denote positive constants that depend only on $\mcU_j$.
	
We want to show that $N(y)$ is Lipschitz on each $\mcU_j$, namely,
for some constants $C_j$, it holds that
	\begin{align}\label{N-Lip-On-Uj}
		\left\vert N(y_1)-N(y_2)\right\vert\leq C_j\left\vert y_1-y_2\right\vert, \quad\text{for all }y_1, y_2\in \mcU_j.
	\end{align}
	
To have a chance to prove this, let us first point out that it suffice to consider the case when $y_1, y_2$ are close enough. Precisely, for $r_j<\rho_j/3$, we may assume that 
\begin{align}\label{eq-rectifiability-y1y2}
    y_1\in y_2+\mcC(N(y_2),y_2,\rho_j)
\end{align}
or otherwise, $\vert y_1-y_2\vert\geq c(n)r_j$ and it is trivial to see that $\vert N(y_1)-N(y_2)\vert\leq 2\leq C_j\vert y_1-y_2\vert$.

Next, with \eqref{eq-rectifiability-y1y2}, we may further assume, up to a rigid motion as before, that
\begin{align}\label{eq-y2Ny2}
    y_2=(\vec{0},1,0)\in\mfR^n\times\mfR^2,\quad N(y_2)=(\vec{0},0,1)\in\mfR^n\times\mfR^2.
\end{align}
In this way, \eqref{U_j-as-localgraph} reads as
\begin{align}\label{eq-graphy2}
    \left\{(z,h_1,h_2)\in\Gamma_s^t:\vert z\vert<\rho_j,\vert h_i\vert<\rho_j\right\}=\left\{(z,\psi_j^1(z),\psi_j^2(z)):z\in V_j\right\},
\end{align}
with $\psi^i_j\in C^1(V_j)$ satisfying \eqref{psi_j}.

By \eqref{eq-rectifiability-y1y2} again, $y_1=(z_1,\psi_j^1(z_1),\psi_j^2(z_1))$ for some $z_1\in V_j$ with $\vert z_1\vert<r_j$. Since $\psi_j^i$ is $C^1$ on $V_j$, the Taylor theorem gives
\begin{align}
    \psi_j^i(z_1)
    %=&\psi_j^i(\vec0)+\left\<z_1,\nabla_z\psi_j^i({\color{purple}\vec0})\right\>+o(\vert z_1\vert)\notag\\
    ={\color{black}\psi_j^i(\vec0)+o(\vert z_1\vert)}.
\end{align}
In view of this and invoking \eqref{psi_j}, \eqref{appendix-eq-1} thus gives a normal vector field in the form
\begin{align}\label{defn-tildeN}
    \tilde N(y_1)=&\left(\left(-1+o(\vert z_1\vert)+\left<z_1,\nabla_z\psi_j^2\right>\right)\nabla_z\psi_j^2+o(\vert z_1\vert)\nabla_z\psi_j^1,o(\vert z_1\vert),1-o(\vert z_1\vert)\right)\notag\\
    =&\left(\left(-1+o(\vert z_1\vert)\right)\nabla_z\psi_j^2+o(\vert z_1\vert)\nabla_z\psi_j^1,o(\vert z_1\vert),1-o(\vert z_1\vert)\right),
\end{align}
where $\vert \nabla_z\psi_j^1\vert\leq 1$ on each $V_j$ due to \eqref{psi_j}. In particular, set $N(y)=\tilde N(y)/\vert\tilde N(y)\vert$ and we readily see that
\begin{align}\label{ineq-N(y1)-z1}
    \vert N(y_1)-(\vec0,0,1)\vert^2\leq C_j\vert z_1\vert^2,
\end{align}
once provided
\begin{align}\label{eq-psi2}
    \vert\nabla_z\psi_j^2\vert\leq C_j\vert z\vert\quad\text{on }V_j.
\end{align}
Let us verify the validity of \eqref{eq-psi2} by Delgadino-Maggi's approach (see \cite[proof of (3-25)]{DM19} for a detailed codimension-1 argument).
{\color{black}First, for any $y,y_0\in y_2+\mcC(N(y_2),y_2,\rho_j)$ as in \eqref{eq-graphy2},
we set
\begin{align}
    N_1(y)=\frac{(-\nabla_z\psi_j^1,1)}{\sqrt{1+\vert\nabla_z\psi_j^1\vert^2}}, y'=(z,\psi_j^1(z)),y_0'=(z_0,\psi_j^2(z_0)).\\
    N_2(y)=\frac{(-\nabla_z\psi_j^2,1)}{\sqrt{1+\vert\nabla_z\psi_j^2\vert^2}}, y''=(z,\psi_j^2(z)),y_0''=(z_0,\psi_j^2(z_0)).\label{eq-N2y''y0''}
\end{align}
Note that similar with \eqref{eq-N(y)-linearCombination}, we may write a normal vector field as
\begin{align}\label{eq-Na1a2}
    \tilde N(y)=a_1(y)\frac{(-\nabla_z\psi_j^1,1,0)}{\sqrt{1+\vert\nabla_z\psi_j^1\vert^2}}+a_2(y)\frac{(-\nabla_z\psi_j^2,0,1)}{\sqrt{1+\vert\nabla_z\psi_j^2\vert^2}},
\end{align}
where $a_1(y)=\left(\left<z,\nabla_z\psi^2\right>-\psi^2(z)\right)\sqrt{1+\vert\nabla_z\psi_j^1\vert^2}, a_2(y)=\left(-\left<z,\nabla_z\psi^1\right>+\psi^1(z)\right)\sqrt{1+\vert\nabla_z\psi_j^2\vert^2}$.

Observe that $a_1(y)=-\left<N_2(y),y''\right>\sqrt{1+\vert\nabla_z\psi_j^1\vert^2}\sqrt{1+\vert\nabla_z\psi_j^2\vert^2}$, and from \eqref{psi_j} we know that $$a_2((\vec 0,1,0))=1,
\frac{(-\nabla_z\psi_j^1,1,0)}{\sqrt{1+\vert\nabla_z\psi_j^1\vert^2}}\mid_{z=\vec0}=(\vec0,1,0)=y\mid_{z=\vec0},$$
by continuity, we may assume that $a_2(y),\left<(-\nabla_z\psi_j^1,1,0),y_0\right>$ are closed to $1$, and we have
\begin{align}\label{eq-tildeN-y0}
    \left<\tilde N(y),y_0\right>
    =&-\left<N_2(y),y''\right>\left<(-\nabla_z\psi_j^1,1,0),y_0\right>+\left<N_2(y),y_0''\right>\left(-\left<z,\nabla_z\psi^1\right>+\psi^1(z)\right)\notag\\
    =&\left<N_2(y),y_0''-y''\right>\left(-\left<z_0,\nabla_z\psi_j^1\right>+\psi^1_j(z_0)\right)\notag\\
    &+\left<N_2(y),y_0''\right>\left(\psi_j^1(z)-\psi_j^1(z_0)+\left<z_0-z,\nabla_z\psi_j^1(z)\right>\right),
\end{align}
for the second term, since $\Gamma_s^t$ is trapped between two mutually tangent geodesic balls at $y$,
%and notice that $N(y)\mid_{z=\vec0}=(0,0,1)$,
we may project $\Gamma_s^t$ and the two geodesic balls over the plane $T_{y_2}\mfS^{n+1}$ to find that the projected graph is punctually second order differentiable at $z$ and
%\begin{align*}
    %\vert\left<N_2(y),y_0''\right>\vert
    %\leq\vert y-y_0\vert.
%\end{align*}
%On the other hand, since $\psi_j^1$ is $C^1$ on $V_j$, we may use the Taylor theorem to see that
\begin{align*}
    \psi_j^1(z)-\psi_j^1(z_0)+\left<z_0-z,\nabla_z\psi_j^1(z)\right>
    =o(\vert z-z_0\vert^2),
\end{align*}
which is controlled by $o(\vert y-y_0\vert^2)$.
On the other hand, by continuity we may assume that $\vert\tilde N(y)\vert\leq 2$ and hence \eqref{eq-tildeN-y0} together with
\eqref{Estimate-N(y)(y'-y)} yields that
%\begin{align*}%\label{eq-y-y0}
    %C_j\vert y-y_0\vert^2
    %\geq\left\vert\left<\tilde N(y),y-y_0\right>\right\vert
    %=\left\vert a_1(y)\left<N_1(y),y'-y_0'\right>+a_2(y)\left<N_2(y),y''-y_0''\right>\right\vert,
%\end{align*}
%and hence for a fixed $y$, we have
\begin{align}\label{estimate-N2}
    C_j\vert y-y_0\vert^2\geq%\vert a_1(y)\left<N_1(y),y'-y_0'\right>\right\vert\quad\text{or }
    \left\vert \left<N_2(y),y''-y_0''\right>\right\vert.
\end{align}
}
We exploit \eqref{estimate-N2}
%(say the latter inequality holds, in which case $a_2(y)\neq0$, since if $a_2(y)=0$, we shall get the former inequality)
in the manner of Delgadino-Maggi, with $y=y_1$ and $y_0=(z_0,h_1^0,h_2^0)$, defined by
\begin{align}
    z_0=z_1-\vert z_1\vert e_0,\quad h_1^0=\psi_j^1(z_0),\quad h_2^0=\psi_j^2(z_0),
\end{align}
where $e_0=-\frac{\nabla_z\psi_j^2(z_1)}{\vert\nabla_z\psi_j^2(z_1)\vert}$ is a unit vector, determined as in \cite[(3-30)]{DM19}. \eqref{estimate-N2} then gives
\begin{align}\label{y1-y0}
    C_j\vert y_1-y_0\vert^2\geq\vert a_2(y_1)\vert\left<N_2(y_1),y_1''-y_0''\right>
    =\vert z_1\vert\frac{\left<\nabla_z\psi_j^2(z_1),-e_0\right>}{\sqrt{1+\vert\nabla_z\psi_j^2(z_1)\vert^2}}+\frac{\psi_j^2(z_1)-\psi_j^2(z_0)}{\sqrt{1+\vert\nabla_z\psi_j^2(z_1)\vert^2}}.
\end{align}
To proceed, let us note that for all $\vert z\vert<\rho_j$ such that $(z,\psi_j^1(z),\psi_j^2(z))\in\Gamma_s^t$, it holds that
\begin{align}\label{eq-estimate-psi}
    %&\vert\psi_j^1(z)-1\vert\leq C_j\vert z\vert^2\notag\\
    \vert\psi_j^2(z)\vert\leq C_j\vert z\vert^2,
\end{align}
this is a direct consequence of the following fact: near $y_2=(\vec0,1,0)$, at every $y\in\Gamma_s^t$, $\Gamma_s^t$ is trapped between two mutually tangent geodesic balls.
Due to this, we note that we can only use the estimate \eqref{eq-estimate-psi} for those points that lies in $\Gamma_s^t$. 
%see \cite[(3-28)]{DM19} for the codimension-1 argument. 

By definition of $z_0$, we have $\vert z_0\vert\leq2\vert z_1\vert<2r_j<\rho_j$, if $y_0$ lies exactly in $\Gamma_s^t\in \mcU_j$,
by \eqref{eq-estimate-psi} and the definition of $y_0$, we find
\begin{align}
    &\vert y_1-y_0\vert^2=\vert z_1\vert^2+\sum_{i=1}^2(\psi_j^i(z_1)-\psi_j^i(z_0))^2\leq C_j\vert z_1\vert^2,\\
    &\left\vert\frac{\psi_j^2(z_1)-\psi_j^2(z_0)}{\sqrt{1+\vert\nabla_z\psi_j^2(z_1)\vert^2}}\right\vert\leq\vert\psi_j^2(z_1)\vert+\vert\psi_j^2(z_0)\vert\leq C_j\vert z_1\vert^2,
\end{align}
and hence, recalling the definition of $e_0$, \eqref{y1-y0} gives
\begin{align}
    C_j\vert z_1\vert^2\geq\vert z_1\vert \vert \nabla_z\psi_j^2(z_1)\vert,
\end{align}
this shows \eqref{eq-psi2} when $y_0\in\Gamma_s^t$. On the other hand, if $y_0\notin\Gamma_s^t$, we let $\ep_0$ be the largest $\ep>0$ such that 
\begin{align}
    \left\{\vert z-z_0\vert<\ep\right\}\cap V_j=\varnothing.
\end{align}
Since $z_1\in V_j$ and $\vert z_0-z_1\vert=\vert z_1\vert$ by definition, we know that $\ep_0\leq\vert z_1\vert$. Moreover, since $\vert z_0\vert\leq2\vert z_1\vert$ by definition of $z_0$, it follows that the $n$-dimensional ball $\left\{\vert z-z_0\vert<\ep_0\right\}$ is contained in $\left\{\vert z\vert<3\vert z_1\vert\right\}\subset\left\{\vert z\vert<\rho_j\right\}$ thanks to $3r_j<\rho_j$. Our definition of $\ep_0$ then assures the existence of $z_\ast\in V_j$ with $\vert z_\ast-z_0\vert=\ep_0$ so that
\begin{align}\label{eq-z0-zstar}
    \om_n\vert z_0-z_\ast\vert^n
    =&\mcH^n(\left\{\vert z-z_0\vert<\ep_0\right\})\leq\mcH^n(\left\{\vert z\vert<3\vert z_1\vert\right\}\setminus V_j)\notag\\
    =&\om_n(3\vert z_1\vert)^n-\mcH^n(V_j\cap\left\{\vert z\vert<3\vert z_1\vert\right\}).
\end{align}
On the other hand,
the definition of $\mu_j$ in \eqref{eq-muj} shows that
\begin{align}\label{eq-muj3z1}
    \om_n(3\vert z_1\vert)^n(1-\mu_j(3\vert z_1\vert))\leq\mcH^n(\Gamma_s^t\cap \mcC(N(y_2),y_2,3\vert z_1\vert))\leq\om_n(3\vert z_1\vert)^n(1+\mu_j(3\vert z_1\vert)).
\end{align}
Moreover, recall that $\mcU_j$ is the graph of $\psi_j=(\psi_j^1,\psi_j^2)$ over $V_j$, and the Jacobian of the $C^1$-map $\Psi_j:z\mapsto(z,\psi_j^1(z),\psi_j^2(z))$ is
\begin{align}
    J_{\Psi_j}(z)=\sqrt{{\rm det}\left(\left<D_i\Psi_j,D_k\Psi_j\right>\right)_{1\leq i,k\leq n}},
\end{align}
where $D_i\Psi_j(z)=(0,\ldots,1,0\ldots,0,\p_i\psi_j^1(z),\p_i\psi_j^2(z))$ is the directional derivative of $\Psi_j$ along $e_i$, with $\left\{e_1,\ldots,e_n\right\}$ is the standard Euclidean coordinate of $\mfR^n$. We can use the Laplace expansion for the $n\times n$ matrix $(<D_i\Psi_j,D_k\Psi_j>)_{1\leq i,k\leq n}$ to see that 
\begin{align}
    J_{\Psi_j}(z)=\sqrt{1+\text{terms involving }(\p_i\psi_j^1(z),\p_k\psi_j^2(z))}.
\end{align}
In particular, by virtue of \eqref{psi_j},\eqref{eq-muj} and the continuity of $\nabla_z\psi_j^1, \nabla_z\psi_j^2$, we find: $J_{\Psi_j}(0)=1$, $J_{\Psi_j}(z)$ is close to $1$ near $z=0$, and hence non-vanishing on $V_j$.

Again, by definition of $\mu_j$ in \eqref{eq-muj}, we find
\begin{align}
    \mcH^n(V_j\cap\left\{\vert z\vert<3\vert z_1\vert\right\})
    =&\int_{V_j\cap\left\{\vert z\vert<3\vert z_1\vert\right\}}\frac{J_{\Psi_j}(z)}{J_{\Psi_j(z)}}
    \geq\frac{\int_{V_j\cap\left\{\vert z\vert<3\vert z_1\vert\right\}}J_{\Psi_j}(z)}{\sqrt{1+C(n)\mu_j(3\vert z_1\vert)^{2n}}}\notag\\
    =&\frac{\mcH^n(\Gamma_s^t\cap\mcC(N(y_2),y_2,3\vert z_1\vert))}{\sqrt{1+C(n)\mu_j(3\vert z_1\vert)^{2n}}}\geq\frac{1-\mu_j(3\vert z_1\vert)}{\sqrt{1+C(n)\mu_j(3\vert z_1\vert)^{2n}}}\om_n(3\vert z_1\vert)^n,
\end{align}
where we have used \eqref{eq-muj} in the Laplace expansion of $J_{\Psi_j}(z)$ for the first inequality, the area formula for $\Psi_j$ in the second equality, and \eqref{eq-muj3z1} for the last inequality.

Putting this estimate back into \eqref{eq-z0-zstar}, we find
\begin{align}
    \om_n\vert z_0-z_\ast\vert^n\leq C\mu_j(3\vert z_1\vert)\om_n(3\vert z_1\vert)^n;
\end{align}
i.e.,
\begin{align}\label{ineq-z0-zast}
    \vert z_0-z_\ast\vert\leq C\mu_j(3\vert z_1\vert)^{1/n}\vert z_1\vert.
\end{align}
It follows that $\vert z_\ast\vert\leq\vert z_0\vert+\vert z_0-z_\ast\vert\leq C\vert z_1\vert$, since we know from the definition of $z_0$ that $\vert z_0\vert\leq 2\vert z_1\vert$.

The definition of $z_\ast$ implies that $y_\ast=(z_\ast,\psi_j^1(z_\ast),\psi_j^2(z_\ast))\in \Gamma_s^t$, and hence we may apply \eqref{estimate-N2} with $y_1,y_\ast$ to obtain
\begin{align}\label{ineq-y1-yast-1}
    C_j\vert y_1-y_\ast\vert^2
    \geq&\left<N_2(y_1),y_1''-y_\ast''\right>\notag\\
    \geq&\frac{\left<-\nabla_z\psi_j^2(z_1),z_1-z_\ast\right>}{\sqrt{1+\vert\nabla_z\psi_j^2(z_1)\vert^2}}+\frac{\psi_j^2(z_1)-\psi_j^2(z_\ast)}{\sqrt{1+\vert\nabla_z\psi_j^2(z_1)\vert^2}}\notag\\
    \geq&\frac{\left<-\nabla_z\psi_j^2(z_1),z_1-z_\ast\right>}{\sqrt{1+\vert\nabla_z\psi_j^2(z_1)\vert^2}}-C(\vert z_1\vert^2+\vert z_\ast\vert^2)\notag\\
    \geq&\vert \nabla_z\psi_j^2(z_1)\vert(1-C\mu_j(3\vert z_1\vert)^{1/n})\frac{\vert z_1\vert}{C}-C(\vert z_1\vert^2+\vert z_\ast\vert^2),
\end{align}
where we have used \eqref{eq-estimate-psi} in the third inequality thanks to the fact that $y_1,y_\ast\in\Gamma_s^t$; for the last inequality, we first decomposed $z_1-z_\ast$ into the sum of $z_1-z_0=\vert z_1\vert e_0=-\vert z_1\vert\frac{\nabla _z\psi_j^2(z_1)}{\vert\nabla_z\psi_j^2(z_1)\vert}$ and $z_0-z_\ast$, then we used \eqref{ineq-z0-zast}.
Notice also
\begin{align}\label{ineq-y1-yast-2}
    \vert y_1-y_\ast\vert
    \leq&\vert z_1-z_\ast\vert+\sum_{i=1}^2\vert\psi_j^i(z_1)-\psi_j^i(z_\ast)\vert\notag\\
    \leq&\vert z_1-z_0\vert+\vert z_0-z_\ast\vert+C(\vert z_1\vert^2+\vert z_\ast\vert^2)\leq C\vert z_1\vert.
\end{align}
Thus,  by combining \eqref{ineq-y1-yast-1} with \eqref{ineq-y1-yast-2},
we have proved \eqref{eq-psi2} and hence \eqref{ineq-N(y1)-z1}. In particular, \eqref{ineq-N(y1)-z1} implies \eqref{N-Lip-On-Uj} immediately, since we have the trivial observation
\begin{align*}
    \vert y_1-y_2\vert^2=\vert z_1\vert^2+\vert \psi_j^1(z_1)\vert^2+\vert \psi_j^2(z_1)-1\vert^2\geq\vert z_1\vert^2.
\end{align*}
Therefore we have showed that $N$ is Lipschitz on each $\mcU_j$.}
%By \cite[Theorem 11.4]{Mag12} and $\mcH^n(\Gamma_s^t\setminus(\cup_{j=1}^\infty)U_j)=0$, we see that $N$ is tangentially differentiable along $\Gamma_s^t$ for $\mcH^n$-a.e. and it suffice to explore $N$ on each $U_j$ by virtue of \cite[Proposition 10.5]{Mag12}.}
By \eqref{Estimate-N(y)(y'-y)}
and \eqref{N-Lip-On-Uj}, on each $\mcU_j$, we can use the Whitney-Glaser extension theorem (see for example \cite{LGr09}) to find that there exists $\phi\in C^{1,1}(\mfR^{n+2})$ such that $(u,N)=(\phi,\nabla\phi)$ on $\mcU_j$.
	Then, by the  ${C^{1,1}}$-Implicit function theorem, for each $y\in \mcU_j$,
	there exists $\psi_j=(\psi_j^1,\psi_j^2)\in C^{1,1}({\rm span}\left\{N(y),y\right\}^\perp)$,
	%satisfying \eqref{psi_j} (up to a rigid motion) and \eqref{U_j-as-localgraph},
	which completes the proof.
\end{proof}

\section{The Heintze-Karcher inequality on sphere}\label{Sec-5}
With the $C^{1,1}$-rectifiability in force, we are going to derive the definitions of the principal curvature, viscosity mean curvature and boundary in the spirit of Delgadino-Maggi, thus extends \cite[Lemma 7 ]{DM19} from Euclidean space to $\mfS^{n+1}$. In this section, we continue to use the notations in \cref{defn-Gamma_s^t}, \eqref{defn-Omegas} and in the proof of \cref{Thm-C11Rec}.
\begin{proposition}\label{Prop3-summary}
	For any nonempty open set $\Om\subset\mfS^{n+1}$, there holds 
	\begin{enumerate}
		\item $N$ is tangentially differentiable along $\Gamma_s^t$ at $\mcH^n$-a.e. $y\in\Gamma_s^t$, with
		\begin{align}\label{eq-range-kappa}
		   \begin{cases}
		       \nabla^{\Gamma_s^t}N(y)=-\sum_{i=1}^n(\kappa_s^t)_i(y)\tau_i(y)\otimes\tau_i(y),\\
		   -\cot s\leq(\kappa_s^t)_i(y)\leq\cot(t-s),
		   \end{cases}
		\end{align}
			where $\left\{\left( \kappa_s^t\right)_i(y)\right\}_{i=1}^n$ denote the principal curvatures of $N$ along $\Gamma_s^t$ at $y$ when they exist, which are indexed in increasing order.
		\item For a.e. $0<s<\pi$, $\mcH^n(\Gamma_s^+)=\mcH^n(\p\Omega_s)$.
	\item \label{prop4.1-item3-new}For every $r<s<t$, the map $g_r:\Gamma_s^t\ra\Gamma_{s-r}^t$, given by $g_r(y)=\cos{r}y-\sin{r}N(y)$ for $y\in\Gamma_s^t$, is a bijection from $\Gamma_s^t$ to $\Gamma_{s-r}^t$ and is Lipschitz when restricted to $\mcU_j$, with
	\begin{align}
		J^{\Gamma_s^t}g_r(y)=\prod_{i=1}^n\left[\cos{r}+\sin{r}(\kappa_s^t)_i \right],\quad (\kappa_{s-r})_i(g_r(y))=\frac{-\sin{r}+\cos{r}(\kappa_s)_i(y)}{\cos{r}+\sin{r}(\kappa_s)_i(y)},
	\end{align}
for $\mcH^n$-a.e. $y\in\Gamma_s^t$.
		%	\item For $r<s<t,$ the map $g_r:\Gamma_s^t\ra\Gamma_{s-r}^t$, defined by $g(y)=y-rN(y)$ for $y\in\Gamma_s^t$, is a Lipschitz bijection from $\Gamma_s^t$ to $\Gamma_{s-r}^t$, with 
		%	\begin{align*}
			%		J^{\Gamma_s^t}g_r(y)=\prod_{i=1}^n\left[\cos{r}+\sin{r}(\kappa_s^t)_i \right],\quad (\kappa_{s-r})_i(g_r(y))=
			%	\end{align*}
	\end{enumerate}
\end{proposition}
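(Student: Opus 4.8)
The plan is to treat the three items of \cref{Prop3-summary} in order; item (1) feeds into item (3), while item (2) is essentially independent and is the only place where genuinely new work beyond \cref{Thm-C11Rec} is needed.

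\textbf{Item (1).} By \cref{Thm-C11Rec} — more precisely by the Lipschitz estimate \eqref{N-Lip-On-Uj} obtained in its proof — $\Gamma_s^t$ is covered, up to an $\mcH^n$-null set, by countably many pieces $U_j$ on each of which $N$ is Lipschitz; the Rademacher-type theorem (\cite[Theorem 11.4]{Mag12}, cf.\ \cref{DM-2.1(iv)}) then gives that $N$ is tangentially differentiable along $\Gamma_s^t$ at $\mcH^n$-a.e.\ $y$. Since $\vert N\vert^2\equiv1$ on $\Gamma_s^t$ and, because $N(y)\cdot y=0$, also $N\cdot{\rm id}\equiv0$ on $\Gamma_s^t$, differentiating these relations tangentially shows that $\nabla^{\Gamma_s^t}N(y)$ is orthogonal to both $N(y)$ and $y$, hence is an endomorphism of $T_y\Gamma_s^t={\rm span}\{N(y),y\}^\perp$; its symmetry is that of the $N$-component of the (vector-valued) second fundamental form of the hypersurface $\Gamma_s^t\subset\mfR^{n+2}$, which makes sense at $\mcH^n$-a.e.\ $y$ where the local $C^{1,1}$-graph \eqref{U_j-as-localgraph} is twice differentiable. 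Diagonalising yields $\nabla^{\Gamma_s^t}N(y)=-\sum_i(\kappa_s^t)_i(y)\,\tau_i(y)\otimes\tau_i(y)$ with an orthonormal eigenbasis $\{\tau_i(y)\}$ of $T_y\Gamma_s^t$. For the bounds I would use that, by \cref{Prop-Gammast}, near $y$ the set $\Gamma_s^t$ is squeezed between the geodesic spheres $\partial\mcB_s(x)$ and $\partial\mcB_{t-s}(z)$, all three mutually tangent at $y$: writing them as graphs over the common tangent plane $T_y\Gamma_s^t$ with vanishing gradient at $y$, the inclusion ordering of the graphs forces the ordering of their Hessians at $y$, and since a geodesic sphere of radius $\rho$ in $\mfS^{n+1}$ has constant principal curvatures $\cot\rho$, this gives exactly $-\cot s\le(\kappa_s^t)_i(y)\le\cot(t-s)$.

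\textbf{Item (3).} Geometrically, $g_r(y)=\exp_y(-rN(y))$ is the point of the geodesic segment $\underline{xz}$ (with $x=\xi(y)$ and $z$ as in \cref{Lem-Gammast1}) at distance $r$ from $y$ towards $x$; as ${\rm dist}_g(x,g_r(y))=s-r$, the triangle-inequality argument of \cref{Lem-Gammast1} gives $u(g_r(y))=s-r$, so $g_r(y)\in\Gamma_{s-r}^t$. The inverse is $g_{-r}\colon y'\mapsto\exp_{y'}(rN(y'))$ (an analogous triangle-inequality argument, using the $1$-Lipschitz control of $u$ against the associated $z$-point, shows it lands in $\Gamma_s^t$); surjectivity and injectivity of $g_r$ follow from this together with the uniqueness of minimising geodesics of length $<\pi$, and the Lipschitz regularity of $g_r$ on each $U_j$ from that of $N$. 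The analytic content is the pair of identities in $\mfR^{n+2}$
\begin{align}
	g_r(y)=\cos r\,y-\sin r\,N(y),\qquad N(g_r(y))=\sin r\,y+\cos r\,N(y),
\end{align}
expressing that $\big(g_r(y),N(g_r(y))\big)$ is the rotation of $\big(y,N(y)\big)$ by angle $r$ in ${\rm span}\{y,N(y)\}$; in particular $T_{g_r(y)}\Gamma_{s-r}^t={\rm span}\{N(g_r(y)),g_r(y)\}^\perp={\rm span}\{N(y),y\}^\perp=T_y\Gamma_s^t$. Differentiating the first identity tangentially along $\Gamma_s^t$ (via \cref{DM-2.1(iv)} and \eqref{DM-2.5}) and inserting \eqref{eq-range-kappa} gives $dg_r(y)[\tau_i]=\big(\cos r+\sin r\,(\kappa_s^t)_i(y)\big)\tau_i$, hence $J^{\Gamma_s^t}g_r(y)=\prod_{i=1}^n\big(\cos r+\sin r\,(\kappa_s^t)_i(y)\big)$, the factors being positive thanks to the lower bound $(\kappa_s^t)_i\ge-\cot s$ from item (1). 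Differentiating the second identity tangentially and comparing with $\nabla^{\Gamma_{s-r}^t}N(g_r(y))=-\sum_i(\kappa_{s-r}^t)_i(g_r(y))\,\tau_i\otimes\tau_i$ yields $(\kappa_{s-r}^t)_i(g_r(y))=\dfrac{-\sin r+\cos r\,(\kappa_s^t)_i(y)}{\cos r+\sin r\,(\kappa_s^t)_i(y)}$; as this map has positive derivative, the increasing ordering is preserved. Finally $g_r$ is bi-Lipschitz on pieces, so all ``$\mcH^n$-a.e.'' assertions transfer between $\Gamma_s^t$ and $\Gamma_{s-r}^t$.

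\textbf{Item (2), and the main obstacle.} For a.e.\ $s\in(0,\pi)$ the coarea formula for the $1$-Lipschitz function $u$ on $\Omega\subset\mfS^{n+1}$ gives $\int_0^\pi\mcH^n(\{u=s\})\,ds=\int_\Omega\vert\nabla u\vert\,d\mcH^{n+1}\le\mcH^{n+1}(\mfS^{n+1})<\infty$, so $\mcH^n(\partial\Omega_s)=\mcH^n(\{u=s\})<\infty$ for a.e.\ $s$; since $\partial^e\Omega_s\subset\{u=s\}$, Federer's criterion then makes $\Omega_s$ a set of finite perimeter for a.e.\ $s$. The remaining identity $\mcH^n(\partial\Omega_s\setminus\Gamma_s^+)=0$ is obtained by transplanting Delgadino--Maggi's argument \cite[Steps 1--3]{DM19} to $\mfS^{n+1}$: one shows that at $\mcH^n$-a.e.\ point $y$ of the reduced boundary $\partial^*\Omega_s$ the vector $N(y)$ is the (measure-theoretic inner) normal, $y$ has a unique nearest point on $\partial\Omega$, and the minimising geodesic through $y$ stays minimising slightly past $y$, so $y\in\Gamma_s^t$ for some $t>s$, i.e.\ $y\in\Gamma_s^+$; the exceptional points — those with non-unique nearest point or sitting at the end of their minimising geodesic — form an $\mcH^{n+1}$-negligible subset of $\Omega$, hence meet $\{u=s\}$ in an $\mcH^n$-null set for a.e.\ $s$ by the coarea inequality. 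Granting \cref{Thm-C11Rec} and the elementary geometry of $\mfS^{n+1}$ recalled in \cref{Sec-2}, items (1) and (3) are essentially bookkeeping; the one delicate point is this transplanted fine analysis of $\partial^*\Omega_s$ — in particular, checking that the finite-perimeter and blow-up arguments of \cite{DM19} are genuinely insensitive to the ambient curvature.
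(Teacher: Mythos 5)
Items (1) and (3) essentially reproduce the paper's argument. For (1) you invoke the $U_j$ decomposition from \cref{Thm-C11Rec} and Rademacher exactly as the paper does (via \cref{DM-2.1(iv)} applied to a Lipschitz extension of $N$), and the curvature bounds are obtained from the same squeeze between the two mutually tangent geodesic spheres of radii $s$ and $t-s$. For (3) your derivation of $N(g_r(y))=\sin r\,y+\cos r\,N(y)$ — reading $\big(g_r(y),N(g_r(y))\big)$ as the rotation of $\big(y,N(y)\big)$ by angle $r$ in ${\rm span}\{y,N(y)\}$ — is a somewhat cleaner shortcut to the paper's half-angle identity $g_r(y)+\tan\frac{r}{2}N(g_r(y))=y-\tan\frac{r}{2}N(y)$, but the ensuing tangential differentiation, the Jacobian $\prod_i\big[\cos r+\sin r\,(\kappa_s^t)_i\big]$, and the M\"obius transformation of the curvatures are identical to the paper's.

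Item (2) is where you diverge, and where the proposal as written is incomplete. You reduce $\mcH^n(\partial\Omega_s\setminus\Gamma_s^+)=0$ to a ``transplant'' of Delgadino--Maggi's blow-up analysis of $\partial^*\Omega_s$, and you yourself flag this as the delicate point — as stated it is an outline, not a proof, and it drags in the finite-perimeter/reduced-boundary machinery. The paper avoids all of that with a short, self-contained argument: the map $f_r(y)=\cos r\,y+\sin r\,N(y)$ is surjective from $\Gamma_s^{s+r}$ onto $\partial\Omega_{s+r}$ for $0<r<t-s$ (simply because any $z'\in\partial\Omega_{s+r}$ lies on a minimizing geodesic from $\partial\Omega$, whose intersection with $\partial\Omega_s$ is a point of $\Gamma_s^{s+r}$); the area formula together with the curvature bounds of item (1) then gives
\begin{align*}
\mcH^n(\partial\Omega_{s+r})\le\big\{(\cot r+\cot s)\sin r\big\}^n\,\mcH^n(\Gamma_s^+),
\end{align*}
and plugging this into the coarea/Lebesgue-differentiation identity $\mcH^n(\partial\Omega_s)=\lim_{\epsilon\to0^+}\epsilon^{-1}\int_0^\epsilon\mcH^n(\partial\Omega_{s+r})\,dr$ (valid for a.e.\ $s$, since $u$ is $1$-Lipschitz) yields $\mcH^n(\partial\Omega_s)\le\mcH^n(\Gamma_s^+)$; the reverse inequality is the inclusion $\Gamma_s^+\subset\partial\Omega_s$. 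This route never touches the reduced boundary, Federer's criterion, or any blow-up — so the genuinely curvature-sensitive transplantation you worry about simply does not arise. If you want a complete proof of (2), I would adopt the paper's $f_r$ argument rather than try to port the Euclidean reduced-boundary analysis.
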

\begin{proof}
We begin by recalling that in \cref{Thm-C11Rec}, we have constructed a sequence of compact sets $\mcU_j$, on which $N$ is Lipschitz (see \eqref{N-Lip-On-Uj}) such that $\mcH^n(\Gamma_s^t\setminus\bigcup_{j=1}^\infty \mcU_j)=0$.
\vspace{0.3cm}

{\color{black}\noindent{\bf (1)} By virtue of \cref{DM-2.1(iv)}, to study the tangential gradient of $N$ on $\Gamma_s^t$, it suffice to work on each $\mcU_j$, see \eqref{psi_j} and \eqref{U_j-as-localgraph} for the construction of $\mcU_j$, where $\mcU_j$ is contained in the graph of the $C^{1,1}$-map $(\psi_j^1(\cdot),\psi_j^2(\cdot))$.

Now, for a fixed $y\in\mcU_j$, we consider a natural Lipschitz extension of $N$,
from $\mcU_j\cap
\left(y+\mcC
\left(N\left(y\right),y,\rho_j \right) \right)$ to $y+\mcC(N(y),y,\rho_j)$, denoted by $N_\ast$ and defined as
	\begin{align}
	N_\ast(y+z+h_1y+h_2N(y))=N(y+z),\quad\forall z\in {\rm span}\left\{N(y),y\right\}^\perp,\vert z\vert,h_1,h_2
	<\rho_j,
	\end{align}
where $N(y+z)$ is just the normal of the graph $(x',\psi_j^1(x'),\psi_j^2(x'))$ at $y+z\in \mcU_j$.
%	In the following computations, we follow the notations in \cite[Section 2.1]{DM19}.
Set $\Psi_j(z):=y+z+\psi_j^1(z)y+\psi^2_j(z)N(y)$ for $\vert z\vert<\rho_j$, {\color{black}by} \eqref{DM-2.5} we have: for $\mcH^n$-a.e. $y'\in \mcU_j$ and for any $\tau\in T_{y'}U_j$, 
	\begin{align*}
		\left(\nabla^{\mcU_j}N\right)_{y'}[\tau]=\nabla(N_\ast\circ \Psi_j)_{\Psi_j^{-1}(y')}[e],
	\end{align*}
	where $e=(\nabla \Psi_j)^{-1}_{\Psi^{-1}_j(y')}[\tau]\in\mfR^n$.
	
If $\psi_j\in C^2({\rm span}\left\{N(y),y\right\}^\perp)$, then for any $z\in {\rm span}\left\{N(y),y\right\}^\perp$, by definition,
	\begin{align*}
		\nabla(N_\ast\circ\Psi_j)_z[e]=\lim_{t\ra0^+}\frac{N_\ast(\Psi_j(z+te))-N_\ast(\Psi_j(z))}{t}=\lim_{t\ra0^+}\frac{N(y+te)-N(y)}{t},
	\end{align*}
this shows that
	\begin{align}\label{shapeoperator}
		\nabla(N_\ast\circ\Psi_j)_z[e]
		%=\nabla(N)_{\Psi_j(z)}[\tau]
		=-S_j(\Psi_j(z))[\tau],
	\end{align}
	where $S_j(\Psi_j)$ denotes the classical shape operator in differential geometry, with respect to the graph of $\psi_j=(\psi_j^1,\psi_j^2)$.}
	
Notice that $\Gamma_s^t$ is trapped between two mutually tangent geodesic balls on $\mfS^{n+1}$ with radius $s$ and $t-s$,
and hence the principal curvatures of the graph of $\psi_j$ is bounded from below by $-\cot{s}$ and from above by $\cot{(t-s)}$ , i.e.,
	\begin{align}\label{kappa_s^t}
		-\cot{s}\leq\left( \kappa_s^t\right)_i(y)\leq\left( \kappa_s^t\right)_{i+1}(y)\leq\cot{(t-s)}.
	\end{align} 

	By the chain rule for Lipschitz functions and the fact that $\nabla^2\psi_j$ is a $\mcH^n$-a.e. classical differential,
	the above argument holds for $\mcH^n$-a.e. $y\in \mcU_j$, {\color{black}which completes the proof of {\bf (1)}}.
	
	\vspace{0.3cm}
	\noindent{\bf (2)} 
	In view of \eqref{eq-N-yz}, given $r\in[-s,t-s]$, we consider the map $f_r:\Gamma_s^t\ra\p\Om_{s+r}$, defined by $f_r(y)=\cos{r}y+\sin{r}N(y)$. By definition of $\Gamma_s^t$, it is clear that $f_r$ is surjective, thus we have
	\begin{align*}
	\mcH^n(\p\Om_{s+r})=\mcH^n(f_r(\Gamma_s^t))\leq\int_{f_r(\Gamma_s^t)}\mcH^0(f^{-1}_r(z))\rd\mcH^n(z),
	\end{align*}
	using the area formula, we find
		\begin{align}\label{ineq-area1}
		\mcH^n(\p\Om_{s+r})\leq\int_{f_r(\Gamma_s^t)}\mcH^0(f^{-1}_r(z))\rd\mcH^n(z)
		=\int_{\Gamma_s^t}J^{\Gamma_s^t}f_r(y)\rd\mcH^n(y),
	\end{align}
	a direct computation then gives that, for $\mcH^n$-a.e. $y\in\Gamma_s^t$,
		\begin{align*}
		J^{\Gamma_s^t}f_r(y)=\prod_{i=1}^n\left[\cos{r}-\sin{r}(\kappa_s^t)_i \right].
	\end{align*}
	For $0<r<t-s$, we have, $\cot r>\cot(t-s)\geq(\kappa_s^t)_i(y)$ by virtue of \eqref{kappa_s^t}, it follows that $\cos r-\sin r(\kappa_s^t)_i>0$ for each $i$, and hence we can use the Cauchy-Schwarz inequality to find
		\begin{align*}
		J^{\Gamma_s^t}f_r(y)=\prod_{i=1}^n\left[\cos{r}-\sin{r}(\kappa_s^t)_i \right]\leq\left\{\left[ \cot{r}+\cot{s}\right]\sin{r}\right\}^n.
	\end{align*}
	It follows from \eqref{ineq-area1} that
		\begin{align}\label{ineq-area2}
	\mcH^n(\p\Om_{s+r})\leq\int_{\Gamma_s^t}\left\{\left[ \cot{r}+\cot{s}\right]\sin{r}\right\}^n\rd\mcH^n\leq\left\{\left[ \cot{r}+\cot{s}\right]\sin{r}\right\}^n\mcH^n(\Gamma_s^t).
	\end{align}
	On the other hand, by the Coarea formula, for a.e. $s>0$, we have
	\begin{align*}
	    \mcH^n(\p\Omega_s)=\lim_{\epsilon\ra0}\frac{1}{\epsilon}\int_0^\epsilon\mcH^n(\p\Omega_{s+r})\rd r,
	\end{align*}
		combining with \eqref{ineq-area1}, we obtain
	\begin{align*}
		\frac{1}{\ep}\int_0^\ep\mcH^n(\p\Om_{s+r})\rd r
		&\leq\frac{1}{\ep}\int_0^\ep\left\{\left[ \cot{r}+\cot{s}\right]\sin{r}\right\}^n\mcH^n(\Gamma_s^t)\rd r\\
		&\leq\frac{\mcH^n(\Gamma_s^+)}{\ep}\int_0^\ep\left[1+\sin{\ep}\vert\cot{s}\vert\right]^n\rd r\\
		&=\left[1+\sin{\ep}\vert\cot{s}\vert\right]^n\mcH^n(\Gamma_s^+).
	\end{align*}
	Notice that $\Gamma_s^+\subset\p\Om_s$, thus we deduce
	\begin{align*}
		\mcH^n(\Gamma_s^+)
	\leq\mcH^n(\p\Om_s)\leq\lim_{\ep\ra0}\left[1+\sin{\ep}\vert\cot{s}\vert\right]^n\mcH^n(\Gamma_s^+)=\mcH^n(\Gamma_s^+),
	\end{align*}
	which proves ${\bf(2)}$.
	\vspace{0.3cm}
	
	\noindent\textbf{(3)} For $r\in(0,s)$, consider the bijection $g_r:\Gamma_s^t\ra\Gamma_{s-r}^t$, defined by $g_r(y)=\cos{r}y-\sin{r}N(y)$, which is Lipschitz on each $\mcU_j$.
	We claim that, if $N$ is tangential differentiable at $y$ along $\Gamma_s^t$,
	then $N$ is also tangential differentiable at $g_r(y)$ along $\Gamma_{s-r}^t$.
	
	Indeed, by a simple geometric relation on sphere ({\color{black}as illustrated in \cref{Fig-2}}), we have
	\begin{align*}
		g_r(y)+\tan{\frac{r}{2}}N(g_r(y))=y-\tan{\frac{r}{2}}N(y),
	\end{align*}
which implies
	\begin{align*}
		N(g_r(y))
		=-\left[\left(\cos{r}-1\right)y-\left(\sin{r}-\tan{\frac{r}{2}}\right)N(y) \right]\cdot\frac{1}{\tan{\frac{r}{2}}}
		%&=\frac{g_r(y)+y}{\sin{r}}-\frac{g_r(y)}{\tan{\frac{r}{2}}}\\
		%&=\frac{\cos{r}+1}{\sin{r}}y-N(y)-\frac{\cos{r}y}{\tan{\frac{r}{2}}}+\frac{\sin{r}N(y)}{\tan{\frac{r}{2}}}\\
		=\sin{r}y+\cos{r}N(y).
	\end{align*}
	Thus 
	\begin{align*}
		\cos{r}N(y)=-\sin{r}y+N(g_r(y))=-\sin ry+N\left(\cos ry-\sin rN(y)\right).
	\end{align*}
	\begin{figure}[h]
	\centering
	\includegraphics[height=7cm,width=11cm]{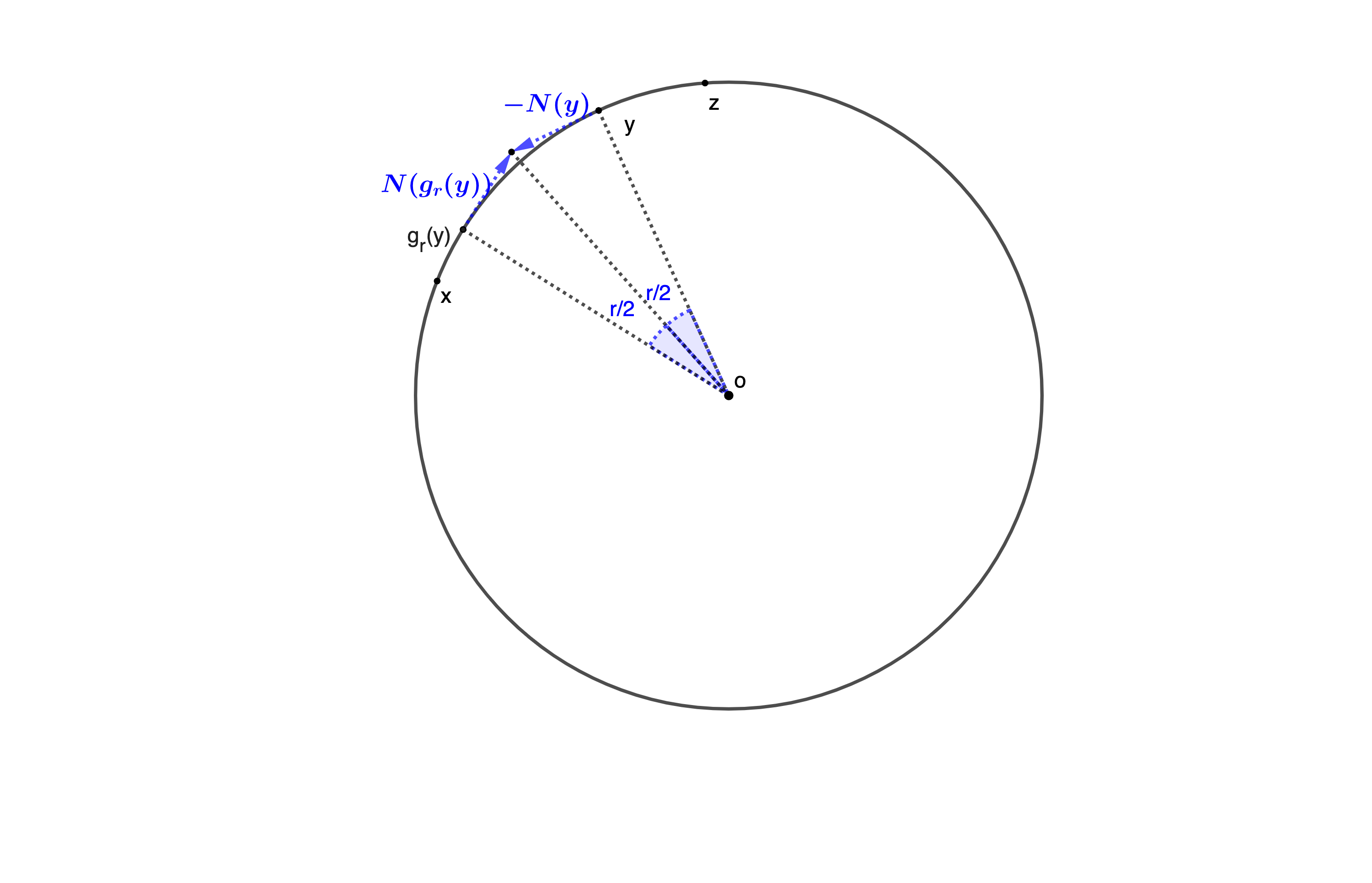}
	\caption{Relation of $y$, $g_r(y)$ and the corresponding normals.}
	\label{Fig-2}
\end{figure}
That is, if $y$ is a point of tangential differentiability of $N$ along $\Gamma_s^t$ and $\tau\in T_y\Gamma_s^t$, then $\tau\in T_{g_r(y)}\Gamma_s^t$ and
	\begin{align*}
		\cos{r}\left(\nabla^{\Gamma_s^t}N\right)_y\left[\tau\right]=-\sin{r}\tau+\left(\nabla^{\Gamma_{s-r}^t}N\right)_{g_r(y)}\left[\cos{r}\tau-\sin{r}\left(\nabla^{\Gamma_s^t}N\right)_y\left[\tau\right]\right],
	\end{align*}
	take $\tau=\tau_i(y)$ to be the eigenvectors of the shape operators $S_j$ in \eqref{shapeoperator}, we obtain
	\begin{align*}
		-\cos{r}(\kappa_s^t)_i(y)\tau_i(y)
		&=-\sin{r}\tau_i(y)+\left(\nabla^{\Gamma_{s-r}^t}N\right)_{g_r(y)}\left[\cos{r}\tau_i(y)+\sin{r}(\kappa_s^t)_i(y)\tau_i(y) \right]\\
		&=-\sin{r}\tau_i(y)+\left(\cos{r}+\sin{r}(\kappa_s^t)_i(y)\right)\left(\nabla^{\Gamma_{s-r}^t}N\right)_{g_r(y)}[\tau_i(y)],
	\end{align*}
	from this we have
	\begin{align*}
		-\tau_i(y)\cdot\left(\nabla^{\Gamma_{s-r}^t}N\right)_{g_r(y)}[\tau_i(y)]=\frac{-\sin{r}+\cos{r}(\kappa_s^t)_i(y)}{\cos{r}+\sin{r}(\kappa_s^t)_i(y)}.
	\end{align*}
	Hence $\{\tau_i(y)\}_{i=1}^n$ is an orthonormal basis for $T_{g_r(y)}\Gamma_{s-r}^t$, and the eigenvalues of $\nabla^{\Gamma_{s-r}^t}N(g_r(y))$ are given by:
	\begin{align}\label{kappa(s-r)}
		(\kappa_{s-r}^t)_i(g_r(y))=\frac{-\sin{r}+\cos{r}(\kappa_s^t)_i(y)}{\cos{r}+\sin{r}(\kappa_s^t)_i(y)},
	\end{align}
	which completes the proof of (3).
\end{proof}
\begin{remark}
    \normalfont
    In view of the rectifiability theorem \cref{Thm-C11Rec} and \cref{Prop3-summary}(2), we have proved the following: for any nonempty open set $\Omega\subset\mfS^{n+1}$, the level-set $\p\Omega_s$ of the distance function $u$ is $C^{1,1}$-rectifiable for a.e. $0<s<\pi$.
\end{remark}	
	Now we are in the position to generalize the viscosity mean curvature defined in \cite{DM19} from Euclidean space to $\mfS^{n+1}$.
	\begin{definition}[Principal curvature and second fundamental form on $\Gamma_s^+$]\label{Defn-2ndfundamentalform}
	    \normalfont For every $0<s<\pi$, the \textit{principal curvatures $(\kappa_s)_i$ of $\Gamma_s^+$} are defined $\mcH^n$-a.e. on $\Gamma_s^+$ by setting
		\begin{align*}
			(\kappa_s)_i=(\kappa_s^t)_i \quad\text{on }\Gamma_s^t\text{ for each }t>s.
		\end{align*}
		where $(\kappa_s^t)_i$ is well-defined on $\mcH^n$-a.e. $y\in\Gamma_s^t$ by virtue of \cref{Prop3-summary}{\bf (1)}.
		In particular, we can define the \textit{mean curvature and the length of the second fundamental form of $\p\Om_s$ with respect to $\nu_{\Om_s}$} at $\mcH^n$-a.e. points of $\Gamma_s^+$ as follows:
		\begin{align*}
			H_{\Om_s}=\sum_{i=1}^n(\kappa_s)_i,\quad\vert A_{\Om_s}\vert^2=\sum_{i=1}^n(\kappa_s)_i^2.
		\end{align*}
	\end{definition}
\begin{lemma}\label{Lem-PrincipalCurvature}
    For every $0<s\neq\pi/2<\pi$ and for $\mcH^n$-a.e. $y\in\Gamma_s^+$ where the principal curvatures are well-defined, let $x=g_s(y)$, then the limit
		\begin{align}\label{kappa_i Omega}
			\kappa_i(x)=\lim_{r\ra s^-}(\kappa_{s-r})_i(g_r(y))
		\end{align}  
		exists by monotonicity.
		\begin{proof}
		Let $y\in\Gamma_s^+$ be the point where the principal curvatures are well-defined, and let $x=g_s(y)$.
		For $0<r_1<r_2<s$, by \eqref{kappa(s-r)} we have
		\begin{align*}
			(\kappa_{s-r_1}^t)_i(g_{r_1}(y))-(\kappa_{s-r_2}^t)_i(g_{r_2}(y))
			&=\frac{-\tan{r_1}+(\kappa_s^t)_i(y)}{1+\tan{r_1}(\kappa_s^t)_i(y)}-\frac{-\tan{r_2}+(\kappa_s^t)_i(y)}{1+\tan{r_2}(\kappa_s^t)_i(y)}\\
			&=\frac{(\tan{r_2}-\tan{r_1})\cdot\left(1+(\kappa_s^t)_i^2(y)\right)}{\left(1+\tan{r_1}\left(\kappa_s^t\right)_i(y) \right)\cdot\left(1+\tan{r_2}\left(\kappa_s^t\right)_i(y) \right)}.
		\end{align*}
		 Notice also that $(\kappa_s^t)_i(y)$ is a bounded number as in \eqref{kappa_s^t}, and hence $(\kappa_{s-r}^t)_i(x)$ is monotone decreasing on $r$, it follows that \eqref{kappa_i Omega} exists.

		\end{proof}
\end{lemma}
	\begin{definition}\label{Defn-ViscosityMeanConvex}
		\normalfont
		For any nonempty open set $\Om$ in $\mfS^{n+1}$, the \textit{viscosity boundary} of $\Om$ is defined as
		\begin{align*}
			\p^v\Om=\bigcup_{s>0}g_s(\Gamma_s^+)
		\end{align*}
		and the corresponding \textit{viscosity mean curvature} of $\Om$ is defined at those $x$ where the limit \eqref{kappa_i Omega} exists for some $s>0$, defined by
		\begin{align*}
			H_{\Om}^v=\sum_{i=1}^n\kappa_i(x).
		\end{align*}
		\end{definition}
		We would like to mention that the viscosity principal curvatures
		are strictly related to the principal curvatures defined and investigated in \cite{Santilli20,HS22}.
		
		Finally, we can prove a Heintze-Karcher-type inequality in the spirit of Brendle's monotonicity approach \cite{Bre13}, see also \cite[Theorem 8]{DM19} for the Euclidean version.
\begin{proof}[Proof of \cref{Thm-HK}]
	We define for $0<s<\frac{\pi}{2}$,
	\begin{align*}
		Q(s)=\int_{\Gamma_s^+}\frac{\cos{s}}{H_{\Om_s}}\rd\mcH^n.
	\end{align*}
Notice that by the monotonicity (see \cref{Lem-PrincipalCurvature}), the viscosity mean convexity of $\Om$ implies $H_{\Om_s}>0$ on $\Gamma_s^+$ for every $s\in(0,\frac{\pi}{2})$.
With this observation, for every $s<t<\frac{\pi}{2}$, we define $Q^t:(0,t)\ra(0,\infty)$ by setting
\begin{align*}
	Q^t(s)=\int_{\Gamma_s^t}\frac{\cos{s}}{H_{\Om_s}}\rd\mcH^n.
\end{align*}
Observe that by definition,
\begin{align}\label{DM4-5}
	Q(s)\geq Q^t(s)\geq Q^{t+\ep}(s)\quad\text{for all }t>s,\ep>0,
\end{align}
notice also that $\mcH^n(\Gamma_s^t)$ converges monotonically to $\mcH^n(\Gamma_s^+)$ as $t\ra s^+$ by virtue of the inclusion \cref{Prop-Gammast}{\bf (1)}. This implies 
\begin{align}\label{DM4-6}
	Q(s)=\lim_{t\ra s^+}Q^t(s)=\sup_{t>s}Q^t(s)\quad\text{for all }0<s<\frac{\pi}{2}.
\end{align}
For $r\in(0,s)$, by \cref{Prop3-summary} \eqref{prop4.1-item3-new}, we have
\begin{align}\label{eqc1}
	Q^t(s-r)-Q^t(s)
	=&\int_{\Gamma_{s-r}^t}\frac{\cos{(s-r)}}{H_{\Om_{s-r}}}\rd\mcH^n-\int_{\Gamma_s^t}\frac{\cos{s}}{H_{\Om_s}}\rd\mcH^n\notag\\
	=&\int_{\Gamma_s^t}\left\{\frac{\cos(s-r)\prod_{i=1}^n\left[\cos{r}+\sin{r}(\kappa_s^t)_i\right]} {\sum_{i=1}^n(-\sin{r}+\cos{r}(\kappa_s^t)_i)/(\cos{r}+\sin{r}(\kappa_s^t)_i)}
	-\frac{\cos{s}}{H_{\Om_s}}\right\}\rd\mcH^n,
\end{align}
where 
\begin{align*}
	&\sum_{i=1}^n(-\sin{r}+\cos{r}(\kappa_s^t)_i)/(\cos{r}+\sin{r}(\kappa_s^t)_i)\notag\\
	=&\sum_{i=1}^n\frac{(-\sin{r}+\cos{r}(\kappa_s^t)_i)\prod_{j\neq i}(\cos{r}+\sin{r}(\kappa_s^t)_i)}
	{\prod_{i=1}^n\left[\cos{r}+\sin{r}(\kappa_s^t)_i\right]}\notag\\
	=&\frac{\left\{\cos^n{r}H_{\Om_s}+\cos^{n-1}{r}\sin{r}\left(H^2_{\Om_s}-\vert A_{\Om_s}\vert^2\right)-n\sin{r}\cos^{n-1}{r}+O(\sin^2{r})
		\right\}}{\prod_{i=1}^n\left[\cos{r}+\sin{r}(\kappa_s^t)_i\right]}.
\end{align*}
Thus \eqref{eqc1} reads
\begin{align*}
		&Q^t(s-r)-Q^t(s)\notag\\
		=&\int_{\Gamma_s^t}\left\{\frac{\cos(s-r)\left(\prod_{i=1}^n\left[\cos{r}+\sin{r}(\kappa_s^t)_i\right]\right)^2}{\cos^n{r}H_{\Om_s}+\cos^{n-1}{r}\sin{r}\left(H^2_{\Om_s}-\vert A_{\Om_s}\vert^2\right)-n\sin{r}\cos^{n-1}{r}+O(\sin^2{r})}
			-\frac{\cos{s}}{H_{\Om_s}}\right\}\rd\mcH^n\notag\\
		=&\int_{\Gamma_s^t}\left\{\frac{\cos(s-r)\left(\cos^{2n}{r}+2\sin{r}\cos^{2n-1}{r}H_{\Om_s}+O(\sin^2{r})\right)}{\cos^n{r}H_{\Om_s}+\cos^{n-1}{r}\sin{r}\left(H^2_{\Om_s}-\vert A_{\Om_s}\vert^2\right)-n\sin{r}\cos^{n-1}{r}+O(\sin^2{r})}
		-\frac{\cos{s}}{H_{\Om_s}}\right\}\rd\mcH^n.
\end{align*}
Notice that 
\begin{align*}
	\cos(s-r)\cos{r}=\cos{s}+\sin(s-r)\sin{r},
\end{align*}
and hence we have
\begin{align}
	&Q^t(s-r)-Q^t(s)\notag\\
	=&\int_{\Gamma_s^t}\left\{\frac{\cos^{2n-1}{r}\cos{s}+\cos^{2n-1}r\sin{(s-r)}\sin{r}+2\cos^{2n-2}r\cos{s}\sin{r}H_{\Om_s}+O(\sin^2{r})}{\cos^n{r}H_{\Om_s}+\cos^{n-1}{r}\sin{r}\left(H^2_{\Om_s}-\vert A_{\Om_s}\vert^2\right)-n\sin{r}\cos^{n-1}{r}+O(\sin^2{r})}
	-\frac{\cos{s}}{H_{\Om_s}}\right\}\rd\mcH^n\notag\notag,
%	=&\int_{\Gamma_s^t}\left\{\frac{1}{H_{\Om_s}\left(\cos^n{r}H_{\Om_s}+\cos^{n-1}{r}\sin{r}\left(H^2_{\Om_s}-\vert A_{\Om_s}\vert^2\right)-n\sin{r}\cos^{n-1}{r}+O(\sin^2{r})\right)}
%	\right\}\cdot \notag\\
%	&\left\{\cos{s}H_{\Om_s}\cos^n r(\cos^{n-1}r-1)+\sin{r}\cos{s}H_{\Om_s}^2\cos^{n-1}r(2\cos^{n-1}r-1)+\sin{r}\cos{s}\cos^{n-1}r\vert A_{\Om_s}\vert^2d\mcH^n
%	+1\right\}
\end{align}
where $O_t(\sin^2{r})/r\ra0$ uniformly on $\Gamma_s^t$ as $r\ra0$. We thus find $Q^t$ is differentiable on $(0,t)$ with
\begin{align*}
	(Q^t)'(s)=&\lim_{r\ra0}\frac{Q^t(s-r)-Q^t(s)}{-r}\notag\\
	=&-\int_{\Gamma_s^t}\cos{s}\left(1+\frac{\vert A_{\Om_s}\vert^2}{H^2_{\Om_s}}\right)\rd\mcH^n-\int_{\Gamma_s^t}\frac{n\cos{s}+H_{\Om_s}\sin{s}}{H_{\Om_s}^2}\rd\mcH^n.
\end{align*}
Notice that by \eqref{eq-range-kappa},
$(\kappa_s^t)_i\geq-\cot{s}$, which implies $H_{\Om_s}\sin{s}+n\cos{s}\geq0$ on $\Gamma_s^t$.
Also, by the Cauchy-Schwarz inequality we have $H^2_{\Om_s}\leq n\vert A_{\Om_s}\vert^2$, these facts imply
\begin{align}\label{DM4-7}
	(Q^t)'(s)\leq-\frac{n+1}{n}\int_{\Gamma_s^t}\cos{s}\rd\mcH^n.
\end{align}
For $0<s_1<s_2<\frac{\pi}{2}$, by \eqref{DM4-6}, \eqref{DM4-5} and \eqref{DM4-7} respectively, we find
\begin{align}\label{DM4-8}
	Q(s_1)-Q(s_2)
	=&\lim_{\ep\ra0^+}Q^{s_1+\ep}(s_1)-Q^{s_2+\ep}(s_2)\notag\\
	\geq&\lim_{\ep\ra0^+}Q^{s_2+\ep}(s_1)-Q^{s_2+\ep}(s_2)=Q^{s_2}(s_1)-Q^{s_2}(s_2)\notag\\
	\geq&\frac{n+1}{n}\int_{s_1}^{s_2}\left(\int_{\Gamma_s^{s_2}}\cos{s}\rd\mcH^n\right)\rd s=\frac{n+1}{n}\int_{s_1}^{s_2}\cos{s}\mcH^n(\Gamma_s^{s_2})\rd s,
\end{align}
in particular, $Q$ is decreasing on $(0,\frac{\pi}{2})$ and $Q'$ exists for a.e. $s$ by monotonicity. Using area formula, by virtue of \cref{Prop3-summary} \eqref{prop4.1-item3-new}, we have
\begin{align*}
	\mcH^n(\Gamma_{s-r}^t)=\int_{\Gamma_s^t}\prod_{i=1}^n\left[\cos{r}+\sin{r}(\kappa_s^t)_i \right]\rd\mcH^n,
\end{align*}
where $\left[\cos{r}+\sin{r}(\kappa_s^t)_i \right]\ra1$ uniformly on $\Gamma_s^t$ as $r\ra0$ by virtue of the fact that $-\cot{s}\leq(\kappa_s)_i\leq\cot{(t-s)}$, for each $i\in\{1,\ldots,n\}$. In particular, this shows $\mcH^n(\Gamma_s^t)$ is continuous on $s\in(0,t)$, and the mean value property yields
\begin{align*}
	\int_{s_1}^{s_2}\mcH^n(\Gamma_s^{s_2})\rd s
	=(s_2-s_1)\mcH^n(\Gamma_{s_0}^{s_2}),
\end{align*}
for some $s_0\in(s_1,s_2)$.

On the other hand, letting $r=t-s$ in \eqref{ineq-area2}, we find
\begin{align*}
		\mcH^n(\p\Om_{t})\leq\left\{\left[ \cot{(t-s)}+\cot{s}\right]\sin{(t-s)}\right\}^n\mcH^n(\Gamma_s^t),
\end{align*}
and it follows that
\begin{align*}
	\mcH^n(\p\Om_{s_2})\leq\liminf_{s\ra(s_2)^-}\left(\cos{(s_2-s)+\cot{s_2}\cdot\sin{(s_2-s)}}\right)^n\mcH^n(\Gamma_s^{s_2})\leq\liminf_{s\ra(s_2)^-}\mcH^n(\Gamma_s^{s_2}).
\end{align*}
From this observation, we deduce
\begin{align*}
	\liminf_{s_1\ra(s_2)^-}\frac{1}{s_2-s_1}\int_{s_1}^{s_2}\cos{s}\mcH^n(\Gamma_s^{s_2})\rd s
	\geq\cos{s}\mcH^n(\p\Om_{s_2})\quad\text{for all }0<s_2<\frac{\pi}{2}.
\end{align*}
Thus we conclude from \eqref{DM4-8} that 
\begin{align*}
	-Q'(s)\geq\frac{n+1}{n}\cos{s}\mcH^n(\p\Om_s)\quad\text{for a.e. }s>0.
\end{align*}
Finally, integrating this over $(s,\frac{\pi}{2})$, we obtain the Heintze-Karcher type inequality \eqref{HK-INEQ}. This completes the proof.
\end{proof}
\begin{remark}
    \normalfont
    We are informed by a referee that a general Heintze-Karcher inequality for closed subsets in a uniformly convex finite dimensional Banach space with a characterization of the equality case has been recently obtained in \cite{HS22}.
    
    It is natural to see if there is a characterization of the equality case in \eqref{HK-INEQ}, which could be useful for establishing an Alexandrov-type theorem on the standard sphere among sets of finite perimeter. 
\end{remark}

%%%%%%%%%%%%%%%%%%%%%%%%%%%%%%%%
%%%%%%%%%%%%%%%%%%%%%%%%%%%%%%%%%%%%%%%%%%%%%%%%%%%%%%%%%%%%%%%%%%%%%%%%%%%%%%%%%%%%%%%%%%%%%%%%%%%%%%%%%%%%%%%%%%%%%%%%%%%%%%%%%%%%%%%%%%%%%%%%%%%%%%%%%%%%%%%%%%%%%%%%%%%%%%%%%%%%%%%%%%%%%%%%

%%%%%%%%%%%%%%%%%%%%%%%%%%%%%%%%
%%%%%%%%%%%%%%%%%%%%%%%%%%%%%%%%%%%%%%%%%%%%%%%%%%%%%%%%%%%%%%%%%%%%%%%%%%%%%%%%%%%%%%%%%%%%%%%%%%%%%%%%%%%%%%%%%%%%%%%%%%%%%%%%%%%%%%%%%%%%%%%%%%%%%%%%%%%%%%%%%%%%%%%%%%%%%%%%%%%%%%%%%%%%%%%%

\appendix
{\color{black}
\section{Codimension-2 graphs}

The purpose of this appendix is to present some fundamental results for codimension-2 graphs restricted to the unit sphere in $\mfR^{n+2}$, that is convenient for this paper, since the computation is not easily found in other literatures.

Let $V\subset\mfR^n$ be an open set, given functions $\psi^1, \psi^2\in C^1(V)$. In all follows we use $z$ to denote points in $\mfR^n$, $\nabla_{z}$ denotes the gradient operator in $\mfR^n$. Consider the codimension-2 graph $G=\left\{\left(z,\psi^1(z),\psi^2(z)\right):z\in V\right\}$, we use $y\in\mfR^{n+2}$ to denote the points on this graph, i.e., for $y\in G$, there exists $z\in V$ such that $y=\left(z,\psi^1(z),\psi^2(z)\right)$.
\begin{lemma}\label{Lem-appendix-1}
    If the codimension-2, $C^1$-graph $G$ lies in $\mfS^{n+1}\subset\mfR^{n+2}$, then 
    \begin{align}\label{appendix-eq-1}
        \tilde N(y)=\left(
        \left<z,\nabla_{z}\psi^1\right>\nabla_{z}\psi^2-\left<z,\nabla_{z}\psi^2\right>\nabla_{z}\psi^1+\psi^2\nabla_{z}\psi^1-\psi^1\nabla_{z}\psi^2,
        -\psi^2+\left<z,\nabla_{z}\psi^2\right>, \psi^1-\left<z,\nabla_{z}\psi^1\right>\right)
    \end{align}
    is a normal vector field along $G$.
\end{lemma}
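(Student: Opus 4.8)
The plan is a direct verification, arranged so that the algebra collapses in one line. Parametrize $G$ by $\Psi(z)=(z,\psi^1(z),\psi^2(z))$, so $y=\Psi(z)$ and the tangent space $T_yG$ is spanned by the $n$ vectors
\begin{align*}
	D_i\Psi(z)=(e_i,\p_i\psi^1(z),\p_i\psi^2(z))\in\mfR^n\times\mfR\times\mfR,\qquad i=1,\ldots,n,
\end{align*}
with $\{e_1,\ldots,e_n\}$ the standard basis of $\mfR^n$. It therefore suffices to check that the displayed $\tilde N(y)$ is orthogonal to every $D_i\Psi(z)$.

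First I would record the two obvious normal fields
\begin{align*}
	E_1:=(-\nabla_z\psi^1,1,0),\qquad E_2:=(-\nabla_z\psi^2,0,1),
\end{align*}
which satisfy $E_k\cdot D_i\Psi=-\p_i\psi^k+\p_i\psi^k=0$; since $G$ has codimension $2$ in $\mfR^{n+2}$, $\{E_1,E_2\}$ is a basis of the normal space $N_yG$. Then, writing $B:=\<z,\nabla_z\psi^2\>-\psi^2$ and $C:=\psi^1-\<z,\nabla_z\psi^1\>$, a short expansion shows that $\tilde N(y)=BE_1+CE_2$: the last two components of $BE_1+CE_2$ are $-\psi^2+\<z,\nabla_z\psi^2\>$ and $\psi^1-\<z,\nabla_z\psi^1\>$, and its first $n$ components rearrange exactly to $\<z,\nabla_z\psi^1\>\nabla_z\psi^2-\<z,\nabla_z\psi^2\>\nabla_z\psi^1+\psi^2\nabla_z\psi^1-\psi^1\nabla_z\psi^2$. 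Being a combination of $E_1$ and $E_2$, this $\tilde N$ is orthogonal to $T_yG$, which proves the lemma. (Equivalently one may bypass $E_1,E_2$ and simply expand $\tilde N(y)\cdot D_i\Psi(z)$ into eight monomials and check that they cancel in four pairs.)

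Finally I would explain where the hypothesis $G\subset\mfS^{n+1}$ enters and why this particular $\tilde N$ is the natural one. The outward unit normal of $\mfS^{n+1}$ at $y$ is $y$ itself, so the position vector $y$ is also orthogonal to $T_yG$, and a quick computation gives $E_1\cdot y=\psi^1-\<z,\nabla_z\psi^1\>=C$ and $E_2\cdot y=\psi^2-\<z,\nabla_z\psi^2\>=-B$, whence
\begin{align*}
	\tilde N(y)\cdot y=B\,(E_1\cdot y)+C\,(E_2\cdot y)=BC-BC=0.
\end{align*}
Thus $\tilde N$ lies in $T_y\mfS^{n+1}$ and, $G$ having codimension $1$ in $\mfS^{n+1}$, it is the unique (up to a scalar) normal of $G$ tangent to the sphere, which is the normal field used in the rectifiability argument; it is moreover nowhere vanishing on $G$, since $\tilde N(y)=0$ would force $B=C=0$, i.e.\ $y\perp E_1$ and $y\perp E_2$, and as $y\in N_yG={\rm span}\{E_1,E_2\}$ this would give $|y|^2=0$, impossible on $\mfS^{n+1}$. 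I do not expect any real obstacle: the whole content is the bookkeeping behind $\tilde N=BE_1+CE_2$, and introducing $E_1,E_2$ is precisely what keeps that bookkeeping to a single line.
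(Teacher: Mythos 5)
Your proof is correct and follows essentially the same route as the paper's: both identify $E_1=(-\nabla_z\psi^1,1,0)$, $E_2=(-\nabla_z\psi^2,0,1)$ (the paper uses their negatives) as a basis for the normal space of $G$ in $\mfR^{n+2}$, and both obtain $\tilde N$ as the combination whose coefficients $B,C$ are fixed by the condition $\tilde N\cdot y=0$ coming from $G\subset\mfS^{n+1}$. Your added observations — that $\tilde N$ is the (up to scalar) normal of $G$ tangent to the sphere, and that it is nowhere vanishing — are correct and go slightly beyond what the paper records, but the core argument is the same.
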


\begin{proof}
    We begin by noticing that $(\nabla_{z}\psi^1,-1,0)$ and $(\nabla_{z}\psi^2,0,-1)$ are normal to the graph at any $z\in V$, since we readily observe that $\tau_i(y)=(0,\ldots,1,0,\ldots,\p_i\psi^1(z),\p_i\psi^2(z))\in T_yG$ and $\left\{\tau_1,\ldots,\tau_n\right\}$ forms a basis of $TG$.
    Thus we can express any normal vector $\tilde N(y)$ by
    \begin{align}\label{eq-N(y)-linearCombination}
        \tilde N(y)=a_1(y)(\nabla_{z}\psi^1,-1,0)+a_2(y)(\nabla_{z}\psi^2,0,-1),
    \end{align}
    where $a_1,a_2$ are continuous on $G$.
    
    Since $G\subset\mfS^{n+1}$, we know that $(z,\psi^1(z),\psi^2(z))=y=\nu_{\mfS^{n+1}}(y)\perp \tilde N(y)$ and a direct computation gives
    \begin{align*}
        a_1(y)\left(\left<z,\nabla_{z}\psi^1\right>-\psi^1(x')\right)+a_2(y)\left(\left<z,\nabla_{z}\psi^2\right>-\psi^2(z)\right)=0.
    \end{align*}
    In view of this, we may choose
    \begin{align*}
    a_1(y)=-\left<z,\nabla_{z}\psi^2\right>+\psi^2(z),\quad a_2(y)=\left<z,\nabla_{z}\psi^1\right>-\psi^1(z),
    \end{align*}
    and it follows that \eqref{appendix-eq-1} is valid.
\end{proof}

\begin{lemma}\label{Lem-appendix-2}
    For the codimension-2 graph $G$ and $\tilde N$ as in \cref{Lem-appendix-1}, if at $z=\vec0$, $y=(\vec0,1,0)$ and $\tilde N(y)=(\vec0,0,1)$, then 
    \begin{align*}
        \psi^1(\vec0)=1, \psi^2(\vec0)=0,\quad \nabla_z\psi^1(\vec0)=\nabla_z\psi^2(\vec0)=\vec0.
    \end{align*}
    \begin{proof}
    Using the definition of $y$ and $\tilde N(y)$ to verify the condition: at $z=\vec0$, it holds that $y=(\vec0,1,0)$ and $\tilde N(y)=(\vec 0,0,1)$, one readily finds, \begin{align}\label{eq-LemAppendix2-1}
        \psi^1(\vec0)=1,\psi^2(\vec0)=0,\quad\nabla_z\psi^2(\vec0)=\vec0.
    \end{align}
    On the other hand, since $G\subset\mfS^{n+1}$, we have
    \begin{align*}
        \vert z\vert^2+\psi^1(z)^2+\psi^2(z)^2=1.
    \end{align*}
    Thanks to the $C^1$-differentiability of $\psi^i$, we can take directional derivative near $z=0$ along $e_1,\ldots,e_n$ to obtain
    \begin{align*}
        z_i+\psi^1(z)\p_i\psi^1(z)+\psi^2(z)\p_i\psi^2(z)=0,
    \end{align*}
    where $z_i$ denotes the $i$-th component of $z\in\mfR^n$. In particular, this, together with \eqref{eq-LemAppendix2-1}, shows that $\nabla_z\psi^1(\vec0)=\vec0$, which completes the proof.
\end{proof}
\end{lemma}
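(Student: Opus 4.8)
The plan is to read off everything from two ingredients that are already in hand: the explicit formula \eqref{appendix-eq-1} for the normal field $\tilde N$ from \cref{Lem-appendix-1}, and the constraint that $G$ lies on the unit sphere. No analytic machinery is needed; the whole statement is an evaluation at $z=\vec0$.

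First I would record the zeroth-order values. By definition of the graph $G$, the point of $G$ lying over $z=\vec0$ is $y=(\vec0,\psi^1(0),\psi^2(0))$, so comparing with the hypothesis $y=(\vec0,1,0)$ gives $\psi^1(0)=1$ and $\psi^2(0)=0$ at once. Next I would substitute $z=\vec0$ into \eqref{appendix-eq-1}: every summand carrying an inner product $\langle z,\cdot\rangle$ drops out, and inserting $\psi^1(0)=1$, $\psi^2(0)=0$ collapses the expression to $\tilde N(y)=\bigl(-\nabla_z\psi^2(0),\,0,\,1\bigr)$. Matching this against the hypothesis $\tilde N(y)=(\vec0,0,1)$ componentwise forces $\nabla_z\psi^2(0)=\vec0$.

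Finally, to get $\nabla_z\psi^1(0)=\vec0$ I would use $G\subset\mfS^{n+1}$, i.e.\ the identity $\vert z\vert^2+\psi^1(z)^2+\psi^2(z)^2=1$ on $V$. Since $\psi^1,\psi^2\in C^1(V)$, differentiating along $e_i$ yields $z_i+\psi^1(z)\partial_i\psi^1(z)+\psi^2(z)\partial_i\psi^2(z)=0$ in a neighborhood of $z=\vec0$; evaluating at $z=\vec0$ and using $\psi^1(0)=1$, $\psi^2(0)=0$ leaves $\partial_i\psi^1(0)=0$ for each $i$, so $\nabla_z\psi^1(0)=\vec0$, completing the proof. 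There is no real obstacle here; the only point requiring a moment's care is that the equality $\tilde N(y)=(\vec0,0,1)$ in the hypothesis refers to the specific (unnormalized) field produced in \cref{Lem-appendix-1}, so the matching in the second step is a genuine term-by-term comparison and no normalization factor intervenes.
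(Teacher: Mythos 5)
Your proof is correct and follows essentially the same route as the paper: evaluate the graph point and the explicit formula \eqref{appendix-eq-1} at $z=\vec0$ to get $\psi^1(0)=1$, $\psi^2(0)=0$, $\nabla_z\psi^2(0)=\vec0$, then differentiate the sphere constraint $\vert z\vert^2+\psi^1(z)^2+\psi^2(z)^2=1$ to conclude $\nabla_z\psi^1(0)=\vec0$. Your explicit term-by-term evaluation of $\tilde N$ at $z=\vec0$ simply makes precise what the paper states as "one readily finds."
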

}
%%%%%%%%%%%%%%%%%%%%%%%%%%%%%%%%%%%%%%%%%%%%%%%%%%%%%%

%%%%%%%%%%%%%%%%%%%%%%%%%%%%%%%%
%%%%%%%%%%%%%%%%%%%%%%%%%%%%%%%%%%%%%%%%%%%%%%%%%%%%%%%%%%%%%%%%%%%%%%%%%%%%%%%%%%%%%%%%%%%%%%%%%%%%%%%%%%%%%%%%%%%%%%%%%%%%%%%%%%%%%%%%%%%%%%%%%%%%%%%%%%%%%%%%%%%%%%%%%%%%%%%%%%%%%%%%%%%%%%%%

\printbibliography

\end{document}